\documentclass[journal]{IEEEtran}
\usepackage{cite}
\usepackage{graphicx,subfigure}
\usepackage{algorithm}
\usepackage{algorithmic}
\usepackage{amsmath,amssymb}
\usepackage[active]{srcltx}

\DeclareMathOperator*{\argmin}{arg\,min}

\def\expect{{\mathbb{E}}}

\newcommand{\BlackBox}{\rule{1.5ex}{1.5ex}}  
\newenvironment{proof}{\par\noindent{\bf Proof\
}}{\hfill\BlackBox\\[2mm]}

\newtheorem{lemma}{\bf{Lemma}}

\newtheorem{corollary}{Corollary}

\newtheorem{remark}{Remark}

\newcommand{\be}{\begin{equation}}
\newcommand{\ee}{\end{equation}}
\newcommand{\bea}{\begin{eqnarray*}}
\newcommand{\eea}{\end{eqnarray*}}

\makeatletter
\def\Lddots{\mathinner{\mkern1mu\raise17\p@\vbox{\kern17\p@\hbox{.}}\mkern2mu
    \raise8\p@\hbox{.}\mkern2mu\raise\p@\hbox{.}\mkern1mu}}
 \makeatother

\outer\def\subsect#1\par{\vskip12pt
plus.07\vsize\penalty-250\vskip0pt plus-.07\vsize
\bigskip\vskip\parskip\message{#1}
\vbox{\smash{\lower9pt\hbox{\kern-8pt\epsfbox{shadedbox.eps}}}}\vskip-\baselineskip
\leftline{\large\bf#1}\nobreak\medskip}

\def\BibTeX{{\rmfamily B\kern-.05em{\scshape i\kern-.025em b}\kern-.08em \TeX}}


\newcommand{\cI}{\mathcal{I}}

\newcommand{\bu}{\mathbf{u}} 
\newcommand{\bA}{\mathbf{A}}

\newcommand{\bF}{\mathbf{F}}
 
\newcommand{\bH}{\mathbf{H}} 
\newcommand{\bI}{\mathbf{I}} 
\newcommand{\bJ}{\mathbf{J}} 
\newcommand{\bM}{\mathbf{M}} 
 
\newcommand{\bQ}{\mathbf{Q}} 
\newcommand{\bR}{\mathbf{R}} 
\newcommand{\bS}{\mathbf{S}}

\newcommand{\bY}{\mathbf{Y}}
\newcommand{\bX}{\mathbf{X}} 
\newcommand{\bW}{\mathbf{W}}

\newcommand{\f}{\mathbf{f}} 
\newcommand{\bzero}{\mathbf{0}} 
\newcommand{\bone}{\mathbf{1}} 
 
\newcommand{\br}{\mathbf{r}} 
\newcommand{\bc}{\mathbf{c}} 
\newcommand{\ba}{\mathbf{a}}
\newcommand{\bee}{\mathbf{e}}

\newcommand{\bm}{\mathbf{m}}

\newcommand{\bb}{\mathbf{b}} 
 
\newcommand{\bx}{\mathbf{x}}

\newcommand{\by}{\mathbf{y}} 
 
\newcommand{\bw}{\mathbf{w}}

\def\expect{{\mathbb{E}}}

\makeatletter
\newcommand*{\coloneq}{\mathrel{\rlap{%
                     \raisebox{0.3ex}{$\m@th\cdot$}}%
                     \raisebox{-0.3ex}{$\m@th\cdot$}}%
                     =}
\makeatother

\begin{document}
\IEEEoverridecommandlockouts
\title{Spatial-Spectral Sensing using the Shrink \& Match Algorithm in Asynchronous MIMO OFDM Signals}
\author{\authorblockN{Saeed Bagheri and Anna Scaglione}\\
\authorblockA{School of Electrical and Computer Engineering\\
University of California, Davis\\
Email: \{sabagheri, ascaglione\}@ucdavis.edu}}
\maketitle
\IEEEpeerreviewmaketitle
\begin{abstract}
Spectrum sensing (SS) in cognitive radio (CR) systems is of paramount importance to approach the capacity limits for the Secondary Users (SU), while ensuring the undisturbed transmission of Primary Users (PU). In this paper, we formulate a cognitive radio (CR)systems spectrum sensing (SS) problem in which Secondary Users
(SU), with multiple receive antennae, sense a channel shared
among multiple asynchronous Primary Users (PU) transmitting
Multiple Input Multiple Output (MIMO) Orthogonal Frequency
Division Multiplexing (OFDM) signals. The method we propose
to estimate the opportunities available to the SUs combines
advances in array processing and compressed channel sensing,
and leverages on both the so called ``shrinkage method'' as well
as on an over-complete basis expansion of the PUs interference
covariance matrix to detect the occupied and idle angles of
arrivals and subcarriers. The covariance ``shrinkage'' step and
the sparse modeling step that follows, allow to resolve ambiguities
that arise when the observations are scarce, reducing the sensing
cost for the SU, thereby increasing its spectrum exploitation
capabilities compared to competing sensing methods. Simulations corroborate that exploiting the sparse representation of the covariance matrix in CR sensing resolves the spatial and frequency spectrum of the sources. 
\end{abstract}

\section{Introduction}
Generally, spectrum-sensing methods include matched filter detection\cite{Kay,MF_1}, likelihood ratio test (LRT) \cite{Kay}, energy detection\cite{ED_1,ED_2,ED_3,ED_4}, and cyclostationary feature detection \cite{CS_1,CS_2,CS_3,CS_4,CS_5}, each of which
has different requirements and advantages/disadvantages.  
The non-coherent energy detector has been shown to be optimal if the cognitive devices have no a priori information about the features of the primary
signals except local noise statistics. In addition, it obviates the need for synchronization with unknown transmitted signals. Matched filter based detection requires a priori knowledge of the primary user, e.g., modulation type, preambles, pilots, pulse shaping, and synchronization of timing and carrier. If the modulation schemes of the primary signals are known, then the cyclostationary feature detector can differentiate primary signals from the local noise by exploiting certain periodicity exhibited by the mean and autocorrelation of the corresponding modulated signals. The cyclostationary detection needs to know the cyclic frequencies of the primary signal, which may not be available to the secondary users in practice. 

These methods detect the presence of the PU within a band and they are presumed to be combined with stochastic control algorithms that decide strategically  what bands the CR receiver should examine next \cite{zhao-magazine}.  Finding spectrum holes in a wideband signal is, instead, the objective of wideband SS  and the focus of this paper. 
The exemplary scenario we envision for the CR is that of a femto-cell access point \cite{femto-cell}, in the role of the SU, searching for spectrum opportunities in a dedicated band with base stations acting as PUs and transmitting MIMO-OFDM signals. OFDM is the modulation of choice for most of the emerging broadband wireless communication physical layer standards. In \cite{Tang} the authors argue that OFDM is the best physical layer candidate for a CR system since it allows to modulate signals so as to fit into discontinuous and arbitrarily wide spectrum segments. OFDM is also optimal from the viewpoint of approaching the Shannon channel capacity in a wideband channel with frequency selectivity and colored noise. Finding the spectrum opportunities in an OFDM system is equivalent to detecting the spectrum holes due to unoccupied subcarriers. A similar problem has been formulated in \cite{OFDM_SS, Quan}. The basic difference in our model is that we do not assume any form of OFDM symbol or frame synchronization among PUs and between PUs and SUs. This reflects the wide practice, for example, in today LTE systems and also allows to tackle more general applications.

Usually, the PUs do not cooperate with the SU and do not transmit specific synchronization signals for the purpose of synchronization at the SU. Synchronization sequences from PUs are going to be sent in each frame to allow their mobile users to synchronize with the frame period. We assume that the CR has lined up to the strongest PU signal frame period and focus instead on the rapid estimation of spectrum holes, via second order methods by estimating the interference covariance matrix. The structure of the covariance matrix is dictated by the cyclic prefix in OFDM symbols, antenna array manifold, occupied subcarriers and channel parameters. In this paper, we focus on the rapid estimation of spectrum holes, via second order methods by estimating the interference covariance matrix. The motivation behind using a second order method is that it is non-coherent, does not require synchronization, and does not need knowledge of the PUs modulation.

Previous papers that focused on wideband SS for CRs, used detectors that leverage the structure of the second order statistics of the PU signal \cite{Axell,SO_1,SO_2,SO_3,SO_4}. Compared to these papers, our MIMO-OFDM sensor approximates a Generalized Likelihood Ratio Test (GLRT) using an estimation algorithm for the covariance that we call {\it Shrink and Match} (S\&M). The S\&M algorithm approximate the maximization of the likelihood function for Gaussian PUs, with respect to the parameters of the PUs signal covariance, by alternating between a step of the {\it shrinkage} algorithm \cite{Chen2} and a step of the Orthogonal Matching Pursuit (OMP) algorithm \cite{Tropp} on an appropriately defined linear sparse model for the interference covariance. Under appropriate conditions, the shrinkage method converges to the ML estimate of the Gaussian PUs covariance \cite{Stein,Tyler} with a relatively low cost iteration, and the OMP algorithm is used to {\it denoise} the estimate obtained at each step, leveraging the sparsity of the model. 

A widely explored trade-off in SS is between the time used for sensing and that used to exploit the channel \cite{Ghasemi}. Thus, CR SS must work with very short data records. Harnessing the benefits of its two steps, the numerical results shown the S\&M method features excellent performance in this regime. 

{\it Notation}: The set of real, complex and integer numbers
numbers by $\mathbb{R}$, $\mathbb{C}$ and $\mathbb{Z}$, respectively. We denote sets by calligraphic symbols,
where the intersection and the union of two sets $\mathcal{A}$ and $\mathcal{B}$
are written as $\mathcal{A}\cap\mathcal{B}$ and $\mathcal{A}\cup\mathcal{B}$, respectively. The operator $|\mathcal{A}|$ on a discrete set takes the cardinality
(measure) of the set and $\mathcal{A}^c$ denotes the complement of $\mathcal{A}$, where the universal set should be evident from the context.  We denote vectors and matrices by boldface lower-case and boldface upper-case symbols. The transpose, conjugate, Hermitian (conjugate) transpose, inverse and pseudo inverse of a matrix $\bX$ are denoted by $\bX^T$, $\bX^*$, $\bX^H$, $\bX^{-1}$ and $\bX^{\dagger}$, respectively. $|\bX|$ and $\text{tr}(\bX)$ denote the determinant and trace of matrix $\bX$, respectively. In this work, the vectorization operator for a matrix is denoted by $\text{vec}(\bX)$. $[\bX]_{a}$ ($[\bx]_{a}$) is the $a$th column (element) of $\bX$ ($\bx$). Similarly, $[\bX]_{\mathcal{A}}$ ($[\bx]_{\mathcal{A}}$) is defined as the collection of columns $[\bX]_a$ (entries $[\bx]_a$) where $a \in \mathcal{A}$. The {\it Frobenius} norm of a matrix is denoted by $\Vert\bX\Vert_F$. The conventional $\ell_2$-norm is written as $\Vert\bx\Vert_2$ and $\Vert\bx\Vert_0$ is the number of non-zero entries of the vector $\bx$. $\otimes$ denotes the Kronecker matrix product and $\odot$ denotes the Hadamard matrix product. The operator $\expect\{\cdot\}$ denotes the expectation operator and a circular symmetric complex Gaussian random vector $\bx \in \mathbb{C}^n$ with mean $\boldsymbol{\mu}\in\mathbb{C}^n$ and covariance matrix $\boldsymbol{\Sigma}\in \mathbb{C}^{n\times n}$ is denoted as $\bx\sim\mathcal{CN}(\boldsymbol{\mu},\boldsymbol{\Sigma})$. $\bone_{v}$ is a $v\times v$ matrix of ones and $\bI_{m}$ is the identity matrix of size $m$. $\bzero_{m\times n}$ denotes an $m\times n$ matrix of zeros. 

\section{System Model}
\label{system model}
In our setting, the set of independent asynchronous multiple antenna PUs is denoted by $\cI$ where $I\triangleq |\cI|$ is the maximum number of active sources. The number of available subcarriers is denoted by $N$ and $L_p$ represents the cyclic prefix length. The length of one OFDM symbol is denoted by $M = N+L_p$. The parameter $T = 1/W$ is the sampling period and $W$ is the bandwidth of the system. Each PU uses a set of subcarriers $\mathcal{C}_i$ for transmission where $\mathcal{C}_i \subseteq \mathcal{C} \triangleq \{0,1,\ldots,N-1\}$. The transmitted stream of discrete samples from the $i$th PU can be described as
\be 
\label{u_i_m}
\bu_{i}[m] = \sum_{k\in \mathbb{Z}}\bu_{i,k}[m-kM], 
\ee 
where $m\in \mathbb{Z}$ and $\bu_{i,k}[m]$ is the $k$th OFDM symbol in the transmitted data stream from the $i$th source and can be represented in matrix form as $\bu_{i,k}[m] = \bA_{i,k}\f'_m$ with $m\in \mathcal{M}\triangleq [0,M)$. The $N_T\times N$ matrix $\bA_{i,k}$ contains the transmitted data symbols in the frequency domain. The vector $\ba_{i,k,c} \triangleq [\bA_{i,k}]_{c+1}\in \mathbb{C}^{N_T \times 1}$ is the modulated data symbols transmitted on subcarrier $c \in \mathcal{C}_i$ in the $k$th OFDM symbol or $\ba_{i,k,c} = \bzero$ if $c \notin \mathcal{C}_i$. In this work, we assume that the vector $\ba_{i,k,c}$ for $c \in \mathcal{C}_i$ is randomly distributed zero mean with covariance matrix $$\expect\left\{\ba_{i,k,c} \ba_{i,k,c}^H\right\}=\frac{1}{N_T|\mathcal{C}_i|}\bI\:.$$ The vector $\f'_m\triangleq [\bF]_{m+1}$ for $m \in \mathcal{M}$ where the $N \times M$ matrix $\bF$ is constructed by appending the last $L_p$ columns of the $N\times N$ IFFT matrix at the beginning.
  
As mentioned before, we consider a MIMO channel where the number of transmit and receive antennas are $N_T$ and $N_R$, respectively. The typical propagation channel in wireless systems is assumed to be a multipath channel with at most $L$ dominant propagation paths from scatterers in the far field. We assume that the antenna spacing is small so that the narrowband array manifold approximation holds. In this case, a general representation for the discrete time MIMO channel between source $i$ and the SU receiver is as follows  
\be 
 \bH_i(m,n) = \sum_{\ell=1}^{L} h_{i,\ell}\:\boldsymbol{\Psi}(m,\boldsymbol{\theta}_{i,\ell})g_i(nT-\tau_{i,\ell})\:,
\ee 
where $h_{i,\ell}$ is the channel fading coefficient with unknown variance. The parameter $\tau_{i,\ell}$ is the delay of $\ell$th path of source $i$ and the matrix $\boldsymbol{\Psi}(m,\boldsymbol{\theta}_{i,\ell})$ is an $N_R\times N_T$ MIMO channel matrix parameterized by $\boldsymbol{\theta}_{i,\ell}$ which also depends on $m$ because of Doppler effects and carrier offsets. The function $g_i(\cdot)$ is the cascade of receive and transmit filter of source $i$.

\begin{figure*}[!ht]
\centerline{\includegraphics[width = 0.7\textwidth]{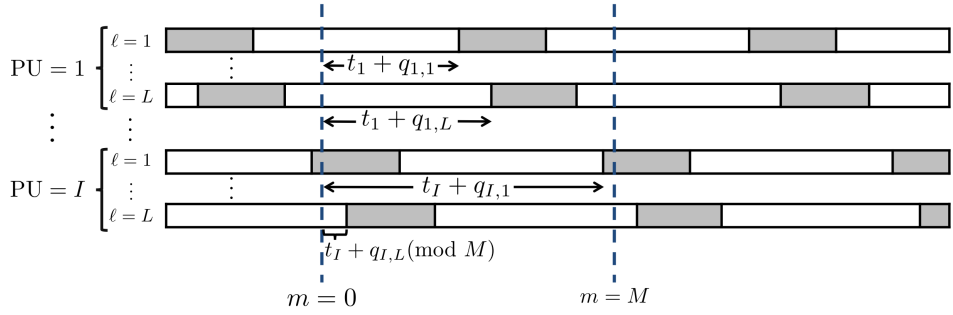}}
\caption{Contribution of different multipath components of received OFDM signals from asynchronous PUs to the observed sequence at the SU.}
\label{fig_ofdm_symbols}
\end{figure*}

\subsection{Signal Model at the Receiver}
The received signal sampled with the rate of $T$ is:
\be
\label{rec_model}
\by[m]=\sum_{i\in {\mathcal I}}\sum_{n} \bH_i(m,n)\bu_i[m-n-t_i]+\bw[m]\:,
\ee
where $\bw[m]\in \mathbb{C}^{N_R}$ is a zero-mean Additive White Gaussian Noise (AWGN) which is both spatially and temporally white and independent of the sources with $\bw[m] \sim \mathcal{CN}(\bzero,\sigma_w^2 \bI_{N_R})$. The unknown parameter $t_i \in \mathcal{M}$ models the misalignment between the received OFDM symbols from the $i$th asynchronous PU and the first observation window (starting at $m=0$) at the SU (See Fig. \ref{fig_ofdm_symbols}). 

In order to simplify the receiver model, following the approach in \cite{Simon_LA,matt}, we discretize the parameters in our channel model with a certain resolution. We quantize the delay and the parameter vector $\boldsymbol{\theta}_{i,\ell}$ and denote the quantized values by
\begin{align}
Q(\tau_{i,\ell}) \triangleq q_{i,\ell}T~, ~~~ \bar{\boldsymbol{\theta}}_{i,\ell} \triangleq \bQ_{\Theta}(\boldsymbol{\theta}_{i,\ell}) \in \mathcal{A}_{\boldsymbol{\Theta}}\:.
\end{align} 
In the following derivations, we ignore the model mismatch error. The parameter $T$ is the time resolution of the time quantization grid and the integer value $q_{i,\ell} \in \mathcal{Q} = [0,L_p)$ is the index of the discrete delay. $\mathcal{A}_{\boldsymbol{\Theta}}$ represents the finite set of quantized parameter vectors, whose cardinality is $n_{\boldsymbol{\Theta}}\triangleq |\mathcal{A}_{\boldsymbol{\Theta}}|$. The function $\bQ_{\Theta}(\cdot)$ is the quantizer associated with the parameter vector $\boldsymbol{\theta}_{i,\ell}$ which can be explicitly described as 
\be 
\bQ_{\Theta}(\boldsymbol{\theta}_{i,\ell}) = \argmin_{\boldsymbol{\theta} \in \mathcal{A}_{\boldsymbol{\Theta}}} \left\Vert\boldsymbol{\theta}_{i,\ell} - \boldsymbol{\theta}\right\Vert_2^2\:.
\ee

The discrete-time received signal is hereinafter modeled as: 
\begin{align} 
\label{y_m}
\by[m] &= \sum_{i\in {\mathcal I}}\sum_{\ell=1}^{L} \sum_{k \in \mathbb{Z}} h_{i,\ell}^k\: \boldsymbol{\Psi}(m,\bar{\boldsymbol{\theta}}_{i,\ell})\bu_{i,k}[m-kM-q_{i,\ell}-t_i] \nonumber \\&+ \bw[m]\:,
\end{align}
where the channel coefficients are independent for different $i$, $\ell$ and $k$: $$\expect\{h_{i,\ell}^{k}(h_{i',\ell'}^{k'})^*\} = \sigma_{i,\ell}\:\delta[i-i']\delta[\ell-\ell']\delta[k-k']\:.$$

\section{Sparse Covariance Matrix Representation}
In this section, we present a sparse model for the PUs MIMO-OFDM interference covariance matrix for the asynchronous model in \eqref{y_m}. In order to accurately estimate the covariance matrix, we require multiple ($K$) independent and identically distributed (i.i.d.) samples of the received signal with the same covariance structure, which means the parameters $\{ \sigma_{i,\ell},\bar{\boldsymbol{\theta}}_{i,\ell},q_{i,\ell},t_i\}_{\ell=1,\ldots,L}^{i=1,\ldots,I}$ remain the same in the time required to collect $K$ samples. To ensure that these observation samples are i.i.d., the SU should use a sample shift equal to $2M$ between consecutive collected sequences to guarantee that firstly all collected data sequences share the same set of parameters $\{t_i\}_{i=1}^{I}$ and secondly they are uncorrelated and independent. Thus, the $r$th $N_R M\times 1$ observation sample in the SU is expressed as
\be 
\by_r = \begin{bmatrix}
 \by[r2M] \\
 \by[r2M+1]\\
 \vdots \\
 \by[r2M+M-1] 
\end{bmatrix}.
\ee 

The following Lemma summarizes the assumptions and the mathematical sparse model for the covariance matrix which depends on the unknown channel parameters and the unknown set of occupied subcarriers.
\begin{lemma}
\label{lemma1}
{\it Assume that $\expect\{\ba_{i,k,c}\ba_{i,k,c}^H\} =  \frac{1}{N_T|\mathcal{C}_i|}\bI$ for $c\in \mathcal{C}_i$ and the transmitted symbols are zero mean and independent for different $i$, $k$ and $c$. The received signal is zero-mean and the sparse representation of its covariance matrix is expressed as }
\begin{align}
\label{Sigma_mat_general}
\text{vec}(\boldsymbol{\Sigma}(\boldsymbol{\sigma},\sigma_w^2)) &\triangleq \text{vec}(\expect\{\by_r\by_r^H\})= \bM\boldsymbol{\sigma} + \sigma_w^2\text{vec}(\bI_{MN_R})\:,
\end{align}
{\it where $\boldsymbol{\sigma}$ is an $MNn_{\boldsymbol{\Theta}}\times 1$ sparse coefficient vector with non-negative entries. The $(MN_R)^2\times MNn_{\boldsymbol{\Theta}}$ matrix $\bM$ is the over-complete dictionary and its columns are constructed with $\text{vec}(\boldsymbol{\Pi}_{v,\boldsymbol{\theta},c}^0 + \boldsymbol{\Pi}_{v,\boldsymbol{\theta},c}^1)$ for  $v\in \mathcal{M}$, $\boldsymbol{\theta}\in\mathcal{A}_{\boldsymbol{\Theta}}$ and $c \in \mathcal{C}$. The parameter $v$ capture the unknown delays and misalignments, $c$ denotes the subcarriers index and $\boldsymbol{\theta}$ models all the possible remaining unknown channel parameters. The matrix $\boldsymbol{\Pi}_{v,\boldsymbol{\theta},c}^j$ for $j=0,1$ is defined as
\begin{align}
\label{Pi_0}
\boldsymbol{\Pi}_{v,\boldsymbol{\theta},c}^j &\triangleq (\bJ^j(v)\otimes\bI)\mathcal{T}(\{\boldsymbol{\Psi}(m,\boldsymbol{\theta})\}_{m=0}^{M-1})(\f_c\f_c^H\otimes \bI)\nonumber \\ 
&~~~\times\mathcal{T}(\{\boldsymbol{\Psi}^H(m,\boldsymbol{\theta})\}_{m=0}^{M-1})((\bJ^j(v))^H\otimes\bI)~,
\end{align}
where $\f_c = [\bF^T]_{c+1}$. $\bJ^j(v)$ for $j=0,1$ is the shift matrix defined as 
\begin{align}
\bJ^0(v) &= 
\begin{bmatrix}
 \bzero_{v \times M-v} &  \bI_{v} \\
 \bzero_{M - v} &  \bzero_{M - v \times v} 
\end{bmatrix}, \bJ^1(v) = 
\begin{bmatrix}
 \bzero_{v \times M-v} &   \bzero_{v}\\
 \bI_{M - v}  &  \bzero_{M - v \times v} 
\end{bmatrix}\nonumber
\end{align}
The operator $\mathcal{T}(\{\mathbf{A}_m\}_{m=0}^{M-1})$ creates a block diagonal matrix with blocks $\mathbf{A}_0,\ldots, \mathbf{A}_{M-1}$ that appear in its input argument.}
\end{lemma}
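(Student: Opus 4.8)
The plan is to compute $\boldsymbol{\Sigma}=\expect\{\by_r\by_r^H\}$ directly from the sample model \eqref{y_m} and to organize the result as a superposition over the quantized parameter grid. First I would observe that, since the data symbols $\ba_{i,k,c}$ are zero-mean and independent of the zero-mean noise $\bw[m]$, the received signal is zero-mean, so $\expect\{\by_r\}=\bzero$. Because $\bw[m]$ is temporally and spatially white and independent of the sources, the noise contributes the additive term $\sigma_w^2\bI_{MN_R}$, whose vectorization is $\sigma_w^2\,\text{vec}(\bI_{MN_R})$; it therefore remains to identify the signal covariance $\boldsymbol{\Sigma}_{\mathrm s}=\boldsymbol{\Sigma}-\sigma_w^2\bI_{MN_R}$ with $\bM\boldsymbol{\sigma}$ after vectorization.

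Next I would use the independence of the channel coefficients, $\expect\{h_{i,\ell}^{k}(h_{i',\ell'}^{k'})^*\}=\sigma_{i,\ell}\,\delta[i-i']\delta[\ell-\ell']\delta[k-k']$, to annihilate all cross terms, so that $\boldsymbol{\Sigma}_{\mathrm s}$ becomes a sum over the triples $(i,\ell,k)$, each weighted by the path power $\sigma_{i,\ell}$. For a fixed source $i$ and symbol $k$, stacking $\bu_{i,k}[m]=\bA_{i,k}\f'_m$ over the window and using $\expect\{\ba_{i,k,c}\ba_{i,k,c}^H\}=\frac{1}{N_T|\mathcal{C}_i|}\bI$ together with $[\f'_m]_{c+1}=[\f_c]_{m+1}$, the transmit covariance collapses to $\frac{1}{N_T|\mathcal{C}_i|}\sum_{c\in\mathcal{C}_i}(\f_c\f_c^H)\otimes\bI_{N_T}$, i.e. one rank-one term $\f_c\f_c^H$ per occupied subcarrier. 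Applying the per-sample spatial response $\boldsymbol{\Psi}(m,\bar{\boldsymbol{\theta}}_{i,\ell})$ amounts to left-multiplying the stacked symbol by the block-diagonal operator $\mathcal{T}(\{\boldsymbol{\Psi}(m,\bar{\boldsymbol{\theta}}_{i,\ell})\}_{m=0}^{M-1})$, which is exactly the factor appearing in \eqref{Pi_0}.

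The core step, and the one I expect to be the main obstacle, is the correct bookkeeping of the asynchronous misalignment. With the combined integer delay of path $(i,\ell)$ reduced modulo $M$ to an offset $v\in\mathcal{M}$, the length-$M$ observation window straddles exactly two consecutive, and hence statistically independent, OFDM symbols: the delayed tail of one symbol and the head of the next. I would show that these two overlapping contributions are selected precisely by the shift matrices $\bJ^0(v)$ and $\bJ^1(v)$, the former extracting the trailing $v$ samples and the latter the leading $M-v$ samples, with the $\otimes\bI$ factor carrying the operation across the receive antennas. Independence of the two symbols makes their covariances add, which is the origin of the sum $\boldsymbol{\Pi}^0_{v,\boldsymbol{\theta},c}+\boldsymbol{\Pi}^1_{v,\boldsymbol{\theta},c}$; care is needed here to track how the cyclic-prefix columns of $\bF$ interact with the window boundaries so that no contribution is double-counted or dropped.

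Finally I would reassemble the pieces. Combining the shift, the block-diagonal channel, and the rank-one subcarrier term yields, for each active path and occupied subcarrier, the matrix $\frac{\sigma_{i,\ell}}{N_T|\mathcal{C}_i|}(\boldsymbol{\Pi}^0_{v,\boldsymbol{\theta},c}+\boldsymbol{\Pi}^1_{v,\boldsymbol{\theta},c})$ with $\boldsymbol{\theta}=\bar{\boldsymbol{\theta}}_{i,\ell}$. Vectorizing and collecting all columns $\text{vec}(\boldsymbol{\Pi}^0_{v,\boldsymbol{\theta},c}+\boldsymbol{\Pi}^1_{v,\boldsymbol{\theta},c})$ over the full grid $(v,\boldsymbol{\theta},c)\in\mathcal{M}\times\mathcal{A}_{\boldsymbol{\Theta}}\times\mathcal{C}$ into the dictionary $\bM$, the signal covariance becomes $\bM\boldsymbol{\sigma}$, where the entry of $\boldsymbol{\sigma}$ indexed by $(v,\boldsymbol{\theta},c)$ aggregates the weights $\sigma_{i,\ell}/(N_T|\mathcal{C}_i|)$ of all paths whose quantized parameters match $(v,\boldsymbol{\theta})$ and for which $c$ is occupied. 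Since these weights are sums of non-negative path variances and only a handful of triples correspond to genuinely active paths and occupied subcarriers, $\boldsymbol{\sigma}$ is non-negative and sparse, which establishes \eqref{Sigma_mat_general}.
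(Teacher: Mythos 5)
Your proposal is correct and follows essentially the same route as the paper's Appendix~A proof: splitting $\by_r$ into signal plus noise, using the independence of channel coefficients and of consecutive data symbols to kill cross terms, representing each path's contribution as the two straddling OFDM symbols selected by $\bJ^0(v)$ and $\bJ^1(v)$, collapsing the transmit covariance to the rank-one terms $\f_c\f_c^H$ per occupied subcarrier, and aggregating the per-path weights $\sigma_{i,\ell}\gamma_{i,c}$ onto the grid $(v,\boldsymbol{\theta},c)$ to obtain the sparse non-negative vector $\boldsymbol{\sigma}$. The only cosmetic difference is that the paper formalizes the grid aggregation through indicator-weighted coefficients $\alpha^j_{i,v,\boldsymbol{\theta}}$ and the identity $\bF^T\mathrm{diag}(\{\gamma_{i,c}\})\bF^*=\sum_c\gamma_{i,c}\f_c\f_c^H$, which is exactly the bookkeeping you describe informally.
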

\begin{proof}
See Appendix A.
\end{proof}

It is worth mentioning that the sparse model in \eqref{Sigma_mat_general} for the covariance matrix satisfying the constraint $\boldsymbol{\sigma} \geq 0$, guarantees that $\boldsymbol{\Sigma}(\boldsymbol{\sigma},\sigma_w^2)$ is Hermitian positive semi-definite. Next, we present our proposed method for estimating the structured covariance matrix in \eqref{Sigma_mat_general}.

\section{The Shrink and Match Algorithm}
\label{est_cov_mat}
In the CR application, one important objective of the SU is to decrease the sensing time in order to increase the time to exploit the empty subcarriers for transmission. As a result, we focus on the cases where the number of observations $K$ is small and the sample covariance matrix $\widehat{\bS} \triangleq \frac{1}{K}\sum_{r=0}^{K-1}\by_r\by_r^H$ cannot be considered a good estimate of the true covariance matrix. 
Our proposed method relies on finding first a suboptimal solution that approximates the ML estimate for the case of Gaussian PUs symbols and then on matching it to the sparse model in Lemma 1. More specifically, the $d\times K$ matrix $\bY \triangleq [\by_{0},\ldots,\by_{K-1}]$ is the matrix of collected zero-mean i.i.d. samples with the same covariance structure where, in general, $d$ denotes the dimension of the problem. Even if the PUs are not Gaussian sources, and the ML receiver would suggest to perform a complex joint detection, $\by_r$ can be approximated to be complex Gaussian distributed, as a maximum entropy approximation of its distribution. The likelihood function for the  unknown parameters $\boldsymbol{\sigma}$ and $\sigma_w^2$ given the observation $\bY$ is expressed as 
\begin{align}
\label{ML_opt}
&\mathcal{L}(\boldsymbol{\sigma},\sigma_w^2|\bY) = \frac{\exp\left[-\text{tr}\left(\bY\bY^{H}\boldsymbol{\Sigma}^{-1}(\boldsymbol{\sigma},\sigma_w^2)\right)\right]}{\pi^{Kd}|\boldsymbol{\Sigma}(\boldsymbol{\sigma},\sigma_w^2)|^K}\:.
\end{align}
The sparsity regularized log-likelihood function of \eqref{ML_opt} is 
\begin{align}
\label{H0_opt}
  \widehat{\boldsymbol{\sigma}},\widehat{\sigma}_{w}^2 &\triangleq \argmin_{\boldsymbol{\sigma},\sigma_w^2} \log|\boldsymbol{\Sigma}(\boldsymbol{\sigma},\sigma_w^2)| + \text{tr}\left[\boldsymbol{\Sigma}^{-1}(\boldsymbol{\sigma},\sigma_w^2)~ \widehat{\bS}\right] + {\kappa} \Vert \boldsymbol{\sigma} \Vert_0 \nonumber \\
 & \text{s.t. }~~~ \sigma_w^2 \geq 0, ~~\boldsymbol{\sigma} \geq 0
\end{align}
where $\Vert \boldsymbol{\sigma} \Vert_0$ is the imposed sparsity constraint and ${\kappa}$ is some regularization parameter. The ML estimate of the covariance matrix is given by $\boldsymbol{\Sigma}(\widehat{\boldsymbol{\sigma}},\widehat{\sigma}_{w}^2)$.

In \eqref{H0_opt}, we have relaxed the original problem by imposing the sparsity constraint. However, even solving this relaxed problem, which is non-convex, is in general complicated. In the following, in order to simplify the optimization, we assume that the noise variance $\sigma_w^2$ has been estimated and is known.



Under the small sample size constraint $K < d$, we approximate the solution of \eqref{H0_opt} by first obtaining an accurate estimate of the covariance matrix which is invertible and then projecting the estimate of the
covariance matrix in the sparse model in \eqref{Sigma_mat_general}. We call this method the Shrink \& Match (S\&M) algorithm. The two steps are explained next. 

\subsubsection{Shrinkage Step}
For estimating the covariance matrix, we use the method proposed in \cite{Chen2}. In \cite{Chen2}, the covariance estimation is based on shrinkage regularized fixed point iterations for a high dimensional setting, where the fixed point iterations converge to the ML estimate. However, in \cite{Chen2}, the structure of the covariance matrix has been ignored. 

This shrinkage method works with trace-normalized covariance matrices ($\text{tr}(\boldsymbol{\Sigma}) = d$) and in this work, the final estimate must be scaled by the estimated value of the trace. Thus, we need to estimate the trace of the covariance matrix which is obtained as 
\be 
\widehat{\text{tr}}(\boldsymbol{\Sigma})\triangleq \frac{1}{K}\sum_{r=0}^{K-1}\Vert\by_r\Vert_2^2\:.
\ee 
The shrinkage coefficient is estimated as \cite{Chen2}   
\be
\hat{\gamma} = \left\{ \begin{array}{ll}
\dfrac{d^2-\dfrac{1}{d}\:\text{tr}(\widehat{\bR}\widehat{\bR}^H)}{d^2-Kd - K+\Big(K+\dfrac{(K-1)}{d}\Big)\text{tr}(\widehat{\bR}\widehat{\bR}^H)}, & K<d \\
0  & K\geq d
\end{array}
\right. 
\ee
where $\widehat{\bR} \triangleq \frac{d}{K}\sum_{r=0}^{K-1}\by_r\by_r^H/\Vert\by_r\Vert_2^2$ is the trace-normalized sample covariance matrix. The iterative steps update the covariance matrix estimate at each iteration as follows
\begin{align}
\label{Sigma_tilde}
\widetilde{\boldsymbol{\Sigma}}_{j+1}&= (1 - \hat{\gamma})\dfrac{d}{K}\sum_{r=0}^{K-1}\dfrac{\by_r\by_r^H}{\by_r^H\widehat{\boldsymbol{\Sigma}}_{j}^{-1}\by_r} + \hat{\gamma}\bI\:.
\end{align}
At this step, we normalize the estimated covariance matrix as
\be 
\widehat{\boldsymbol{\Sigma}}_{j+1} = \dfrac{d~\widetilde{\boldsymbol{\Sigma}}_{j+1}}{\text{tr}(\widetilde{\boldsymbol{\Sigma}}_{j+1})}\:,
\ee
so that it is trace-normalized and return to the update step \eqref{Sigma_tilde} until the stopping criterion is met.

\subsubsection{Matching Step}
In order to impose the structure of the covariance matrix, after the convergence of the iterative process, we first scale the estimated covariance matrix (as $\widetilde{\boldsymbol{\Sigma}} = \frac{1}{d}\widehat{\text{Tr}}(\boldsymbol{\Sigma})\widehat{\boldsymbol{\Sigma}}_{j+1}$ ) to compensate for the trace-normalized assumption, and then fit the covariance matrix with the model in \eqref{Sigma_mat_general} by minimizing the following cost
\begin{align} 
\label{sigma_tilde_j}
\widehat{\boldsymbol{\sigma}} & \triangleq \argmin_{\boldsymbol{\sigma}}\left\Vert\text{vec}(\widetilde{\boldsymbol{\Sigma}} - {\sigma}_w^2\bI )- {\bM} \boldsymbol{\sigma}\right\Vert_{2}^2  + \lambda\Vert\boldsymbol{\sigma}\Vert_0\nonumber \\
& \text{s.t.}~~~ \boldsymbol{\sigma} \geq 0
\end{align}

\begin{remark}
The S\&M algorithm solves a problem similar to the asymptotic ML (AML) estimator for a structured (Hermitian Toeplitz) covariance matrix posed in \cite{Li_1999}, however we have two main differences. In \cite{Li_1999}, the authors use the inverse of the sample covariance matrix to define a weighted  $\ell_2$-norm that they use to estimate the parameters in their model, an approach that is close to optimum in the asymptotic regime of $K \gg d$. Experimentally, we have observed that the choice of the shrinkage method to estimate the covariance matrix, followed by our sparse denoising step using the $\ell_2$-norm instead of the weighted norm, work better when $K<d$. 
\end{remark}

The problem in \eqref{sigma_tilde_j} can be solved by using greedy methods and in particular the non-negative OMP algorithm \cite{non_OMP} in order to satisfy the non-negativity constraint on the variables. It can also be relaxed by imposing the sparsity regularized constraint $\Vert\cdot\Vert_1$, and in this case, can be solved by convex programming as a linear program. Algorithm \ref{alg_H0} summarizes the steps required for this suboptimal method to approximate the solution of \eqref{H0_opt}. Algorithm \ref{NN-OMP} summarizes the steps of the Non-negative OMP method to solve \eqref{sigma_tilde_j}.

\begin{algorithm}
\caption{S\&M Algorithm}
\begin{algorithmic}[1] \label{alg_H0}
\REQUIRE $\bY$, ${\bM}$ and $\sigma_w^2$ 
\STATE {\bf Initialize}: $j=0$ and $\widehat{\boldsymbol{\Sigma}}_{0} = \bI_{d}$. Calculate $\widehat{\text{tr}}(\boldsymbol{\Sigma})$ and $\hat{\gamma}$.
\STATE {\bf Repeat}
\STATE Calculate $\widetilde{\boldsymbol{\Sigma}}_{j+1}$ from \eqref{Sigma_tilde}.
\STATE {\bf Normalization}: $\widehat{\boldsymbol{\Sigma}}_{j+1} = \dfrac{d~\widetilde{\boldsymbol{\Sigma}}_{j+1}}{\text{tr}(\widetilde{\boldsymbol{\Sigma}}_{j+1})}.$
\STATE set $j = j+1$
\STATE {\bf Until} { Stopping criterion}: $\Vert\widehat{\boldsymbol{\Sigma}}_{j}-\widehat{\boldsymbol{\Sigma}}_{j-1}\Vert_{F}^{2}\leq \tau_{\text{min}}\Vert\widehat{\boldsymbol{\Sigma}}_{j-1}\Vert_{F}^{2}$ 
\STATE Solve \eqref{sigma_tilde_j} using Algorithm \ref{NN-OMP} to find $\widehat{\boldsymbol{\sigma}}$.
\end{algorithmic}
\end{algorithm}

\begin{algorithm}
\caption{Non-negative OMP Algorithm}
\begin{algorithmic}[1] \label{NN-OMP}
\REQUIRE obtain $\bM = [\bm_1,\bm_2,\ldots,\bm_D]$, where $D\triangleq MNn_{\boldsymbol{\Theta}}$ and  $\bb\triangleq \text{vec}(\widetilde{\boldsymbol{\Sigma}} - {\sigma}_w^2\bI )$ 
\STATE {\bf Initialize}: $\boldsymbol{\sigma}_0=\bzero$, $\mathcal{S}_0 = \emptyset$,  $\br_0 = \bb$ and $j=1$.
\STATE {\bf Repeat}
\STATE compute 
$$\epsilon(d) = \Vert\br_{j-1}\Vert_2^2 - \dfrac{(\max\{\bm_d^T\br_{j-1},0\})^2}{\Vert\bm_{d}\Vert_2^2}\:,$$
for $1\leq d\leq D$.
\STATE $d^* = \argmin_{d \in \mathcal{S}_{j-1}^c} \epsilon(d)$
\STATE {\bf Update Support}: $\mathcal{S}_{j} = \mathcal{S}_{j-1} \cup \{d^*\}$.
\STATE $\bM_j = [\bM]_{\mathcal{S}_{j}}$ 
\STATE {\bf Find the Non-negative Solution}: 
\begin{align}
\boldsymbol{\sigma}_j^* &= \argmin_{\boldsymbol{\sigma}} \Vert\bM_j\boldsymbol{\sigma} - \bb\Vert_2^2\nonumber \\
& \text{s.t.}~~~ \boldsymbol{\sigma} \geq 0 \nonumber
\end{align}
\STATE {\bf Update Solution}:
\begin{align}
\left[\boldsymbol{\sigma}_j\right]_{\mathcal{S}_{j}} &= \boldsymbol{\sigma}_j^*\:, \nonumber \\
\left[\boldsymbol{\sigma}_j\right]_{\mathcal{S}_{j}^c} &= \bzero \:
\end{align}
\STATE {\bf Update Residual}: $\br_j = \bb - \bM\boldsymbol{\sigma}_j$.
\STATE set $j = j+1$
\STATE {\bf Until} { Stopping criterion}: $\Vert{\boldsymbol{\sigma}}_{j}-{\boldsymbol{\sigma}}_{j-1}\Vert_{2}^{2}\leq \tau_{\text{OMP}}\Vert{\boldsymbol{\sigma}}_{j-1}\Vert_{2}^{2}$ 
\end{algorithmic}
\end{algorithm}

\section{The Separable S\&M Algorithm}
\label{sec:est_subcarrier}
While the channel model used so far is valid for various array configurations and models for time variations, in this section we focus on the more familiar case of uniform linear arrays (ULA) for the receive and transmit antennas and a single Doppler per path. The reason to focus on this case is because the covariance has a separable structure that allows to introduce a greatly simplified version of the S\&M Algorithm.

The discrete time channel between the SU receiver and the $i$th PU can be described \cite{mimo_ch,mimo_ch2} by 
\be 
\label{ula_channel_model}
\boldsymbol{\Psi}(m,\boldsymbol{\theta}_{i,\ell}) = e^{j2\pi \psi_{i,\ell}m T}\bee_r(\beta_{i,\ell})\bee_t^{H}(\alpha_{i,\ell})\:,
\ee
where $\boldsymbol{\theta}_{i,\ell} = [\psi_{i,\ell},\beta_{i,\ell},\alpha_{i,\ell}]$. The vector $\bee_t(\alpha_{i,\ell}) = [1,e^{j2\pi\alpha_{i,\ell}},\ldots,e^{j2\pi(N_T-1)\alpha_{i,\ell}}]^T$ is the steering vector associated with the angle of departure (AoD) and $\bee_r(\beta_{i,\ell}) = [1,e^{j2\pi\beta_{i,\ell}},\ldots,e^{j2\pi(N_R-1)\beta_{i,\ell}}]^T$ is the steering vector associated with the angle of arrival (AoA), where the parameters $\alpha_{i,\ell}$ and $\beta_{i,\ell}$ model the angles of departure and arrival of the $\ell$th propagation path between the SU and the $i$th PU, respectively. In this model, the frequency offset is incorporated into the Doppler spread of the channel. Thus, the parameter $\psi_{i,\ell}\triangleq f_i + \omega_{i,\ell}$ models both carrier frequency offset denoted by $f_i$ and Doppler effects of the $\ell$th path of the $i$th PU denoted by $\omega_{i,\ell}$ with $\psi_{\text{max}}\triangleq \max_{i,\ell} \psi_{i,\ell}$ 

The quantized counterpart of $\boldsymbol{\theta}_{i,\ell}$ is
\be 
\bar{\boldsymbol{\theta}}_{i,\ell} = \Big[(\dfrac{p_{i,\ell}}{P_I})\Delta f, \dfrac{b_{i,\ell}}{B}, \dfrac{a_{i,\ell}}{A}\Big]\:,
\ee
where the parameter $\Delta f \triangleq {1}/{NT}$ denotes the subcarrier frequency. The frequency offset is modelled as ${p_{i,\ell}}/{P_I}$ to capture the fractions of the subcarrier frequency (with the resolution of $\Delta f/P_I$) where the integer value $p_{i,\ell} \in \mathcal{P} \triangleq [p_l,p_l+P)$ where $p_l$ is the smallest value in the set and $|\mathcal{P}|=P$. $1/A$ and $1/B$ are the angular resolution of the angle of departure and arrival, respectively. Moreover, the integer parameters $a_{i,\ell}\in \mathcal{A} \triangleq[0,A)$ and $b_{i,\ell}\in \mathcal{B} \triangleq [0,B)$. 

\begin{corollary}
\label{cor-1}
{\it For the quantized discrete time channel model 
$$\boldsymbol{\Psi}(m,\bar{\boldsymbol{\theta}}_{i,\ell}) = e^{j2\pi \frac{p_{i,\ell}m}{P_IN}}\bee_r(b_{i,\ell}/B)\bee_t^{H}(a_{i,\ell}/A)\:,$$
the $(MN_R)^2\times MNPB$ over-complete dictionary $\bM$ is constructed as 
\be 
[\bM]_{\eta(v,p,c,b)} = \text{vec}((\boldsymbol{\Psi}_{p,c}\odot \boldsymbol{\Upsilon}(v))\otimes (\bee_r(b/B)\bee_r^H(b/B)))\:,
\ee 
where $\eta(v,p,c,b) \triangleq v PNB + (p-p_l)NB + cB + b + 1$. In this model, $v\in \mathcal{M}$, the parameter $p\in \mathcal{P}$ models the Doppler, $c \in \mathcal{C}$ and $b\in \mathcal{B}$. The matrix $\boldsymbol{\Upsilon}(v)$ denotes a masking matrix and is defined as 
\begin{align}
\label{define_mask_mat}
\boldsymbol{\Upsilon}(v) &\triangleq 
\begin{bmatrix}
 \bone_{v}& \bzero_{v \times M-v}    \\
 \bzero_{M - v \times v} &  \bone_{M - v }
\end{bmatrix}.
\end{align} 
The matrix $\boldsymbol{\Psi}_{p,c}\triangleq \boldsymbol{\Lambda}(p)\f_c\f_c^H\boldsymbol{\Lambda}^H(p)$ represents the dependency of the dictionary components on subcarrier $c$ and discretized Doppler $p$ where $\boldsymbol{\Lambda}(p) \triangleq \text{diag}\{e^{j2\pi \frac{p m}{P_I N}}\}_{m=0}^{M-1}$ is a diagonal matrix containing the Doppler coefficients. The vector $\boldsymbol{\sigma}$ in the sparse model \eqref{Sigma_mat_general} is a $MNPB\times 1$ sparse coefficient vector with non-negative entries.}
\end{corollary}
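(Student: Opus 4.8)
My plan is to derive Corollary \ref{cor-1} as a direct specialization of Lemma \ref{lemma1}, substituting the separable ULA channel model \eqref{ula_channel_model} into the general dictionary column formula \eqref{Pi_0} and then simplifying the resulting Kronecker/block structure. The starting point is the observation that each column of $\bM$ in Lemma \ref{lemma1} is $\text{vec}(\boldsymbol{\Pi}_{v,\boldsymbol{\theta},c}^0 + \boldsymbol{\Pi}_{v,\boldsymbol{\theta},c}^1)$, and the goal is to show that, under the quantized model $\boldsymbol{\Psi}(m,\bar{\boldsymbol{\theta}}_{i,\ell}) = e^{j2\pi \frac{p m}{P_I N}}\bee_r(b/B)\bee_t^{H}(a/A)$, this sum collapses into the rank-structured Kronecker form $(\boldsymbol{\Psi}_{p,c}\odot \boldsymbol{\Upsilon}(v))\otimes(\bee_r(b/B)\bee_r^H(b/B))$.

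\emph{First} I would substitute the rank-one-per-tap factorization into the block-diagonal operator $\mathcal{T}(\{\boldsymbol{\Psi}(m,\boldsymbol{\theta})\}_{m=0}^{M-1})$. Because each block is $e^{j2\pi \frac{p m}{P_I N}}\bee_r(b/B)\bee_t^{H}(a/A)$, the block-diagonal matrix factors as $(\boldsymbol{\Lambda}(p)\otimes\text{something})$ times a fixed Kronecker block involving $\bee_r$ and $\bee_t$; the key bookkeeping step is to track how the transmit steering vector $\bee_t^H(a/A)$ annihilates against its conjugate when the two $\mathcal{T}(\cdot)$ factors are multiplied, producing the scalar $\bee_t^H(a/A)\bee_t(a/A) = N_T$ and thereby eliminating the angle-of-departure dependence from the final covariance (this is exactly why the dictionary index $\eta(v,p,c,b)$ runs over $v,p,c,b$ but \emph{not} over $a$). \emph{Next} I would isolate the receive-side factor $\bee_r(b/B)\bee_r^H(b/B)$, which factors out of the whole product as a Kronecker factor on the antenna dimension, leaving a purely temporal $M\times M$ matrix that carries the subcarrier and Doppler structure.

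\emph{Then} I would identify that remaining temporal matrix. The shift matrices $\bJ^0(v)$ and $\bJ^1(v)$, sandwiching the inner product $\boldsymbol{\Lambda}(p)\f_c\f_c^H\boldsymbol{\Lambda}^H(p) = \boldsymbol{\Psi}_{p,c}$, implement a circular-wraparound of the cyclic-prefix contribution; the two terms $j=0$ and $j=1$ correspond to the current OFDM symbol and the adjacent (wrapped) symbol contributing to the same observation window (cf.\ Fig.\ \ref{fig_ofdm_symbols}). The content of this step is to verify that $\bJ^0(v)\boldsymbol{\Psi}_{p,c}(\bJ^0(v))^H + \bJ^1(v)\boldsymbol{\Psi}_{p,c}(\bJ^1(v))^H$ reduces, after using $(\bJ^0(v))^H\bJ^0(v)$ and $(\bJ^1(v))^H\bJ^1(v)$ being complementary diagonal projectors, to the masked matrix $\boldsymbol{\Psi}_{p,c}\odot\boldsymbol{\Upsilon}(v)$ with the block-of-ones mask $\boldsymbol{\Upsilon}(v)$ in \eqref{define_mask_mat}. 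Assembling the receive Kronecker factor with this temporal factor and vectorizing gives the claimed expression for $[\bM]_{\eta(v,p,c,b)}$, and the index map $\eta(v,p,c,b) = v PNB + (p-p_l)NB + cB + b + 1$ is just the lexicographic enumeration over $(v,p,c,b)$ with $|\mathcal{M}|=M$, $|\mathcal{P}|=P$, $|\mathcal{C}|=N$, $|\mathcal{B}|=B$, confirming the dimension $MNPB$ of $\boldsymbol{\sigma}$.

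\emph{The main obstacle} I anticipate is the shift-matrix algebra in the third step: the two matrices $\bJ^0(v)$ and $\bJ^1(v)$ have mismatched block dimensions (the first has an $\bI_v$ in the top-right, the second an $\bI_{M-v}$ in the bottom-left), and correctly tracking which rows and columns survive each product — so that the sum of the two conjugated terms reproduces exactly the all-ones mask $\boldsymbol{\Upsilon}(v)$ rather than leaving cross terms or gaps — requires careful indexing. The appearance of the Hadamard mask from what is ostensibly a sequence of ordinary matrix products is the nontrivial combinatorial identity underpinning the separable form, so I would prove it as a standalone lemma (that $\bJ^0(v)\bX(\bJ^0(v))^H + \bJ^1(v)\bX(\bJ^1(v))^H = \bX\odot\boldsymbol{\Upsilon}(v)$ for any $M\times M$ matrix $\bX$) before plugging in $\bX = \boldsymbol{\Psi}_{p,c}$. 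Everything else — the Kronecker factorizations, the cancellation of the transmit steering vector, and the index counting — is routine once this masking identity is in hand.
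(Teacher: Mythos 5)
Your overall route coincides with the paper's: substitute the quantized ULA model into the dictionary columns $\text{vec}(\boldsymbol{\Pi}_{v,\boldsymbol{\theta},c}^0+\boldsymbol{\Pi}_{v,\boldsymbol{\theta},c}^1)$ of Lemma \ref{lemma1}, use $\bee_t^{H}(a/A)\bee_t(a/A)=N_T$ to eliminate the AoD index $a$, pull out $\bee_r(b/B)\bee_r^H(b/B)$ as a Kronecker factor, and reduce the shift-matrix sum to a Hadamard mask. However, the standalone lemma you propose for the last step is false as stated. For an \emph{arbitrary} $M\times M$ matrix $\bX$, the conjugations by $\bJ^0(v)$ and $\bJ^1(v)$ move the corner diagonal blocks to the \emph{opposite} corners: writing $\bX_{\mathrm{tr}}$ for the trailing (bottom-right) $v\times v$ block and $\bX_{\mathrm{ld}}$ for the leading (top-left) $(M-v)\times(M-v)$ block,
\begin{equation}
\bJ^0(v)\bX(\bJ^0(v))^H + \bJ^1(v)\bX(\bJ^1(v))^H =
\begin{bmatrix}
\bX_{\mathrm{tr}} & \bzero_{v\times M-v}\\
\bzero_{M-v\times v} & \bX_{\mathrm{ld}}
\end{bmatrix},
\end{equation}
whereas $\bX\odot\boldsymbol{\Upsilon}(v)$ keeps the leading $v\times v$ block in the top-left and the trailing $(M-v)\times(M-v)$ block in the bottom-right. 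A $2\times 2$ counterexample: with $M=2$, $v=1$ and $\bX=\begin{bmatrix}1&2\\3&4\end{bmatrix}$, the left-hand side is $\begin{bmatrix}4&0\\0&1\end{bmatrix}$ while $\bX\odot\boldsymbol{\Upsilon}(1)=\begin{bmatrix}1&0\\0&4\end{bmatrix}$. So your plan would stall exactly at the step you flagged as the crux.

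The identity does hold for the specific matrix $\bX=\boldsymbol{\Psi}_{p,c}$, but only because of structure your lemma discards: since $\bF$ is the IFFT matrix with the cyclic prefix prepended, the vector $\f_c$ is a pure complex exponential, $[\f_c]_m\propto e^{j2\pi c(m-L_p)/N}$, so that
\begin{equation}
[\boldsymbol{\Psi}_{p,c}]_{m,m'} \propto e^{j2\pi\left(\frac{p}{P_IN}+\frac{c}{N}\right)(m-m')}
\end{equation}
depends only on $m-m'$, i.e., $\boldsymbol{\Psi}_{p,c}$ is Toeplitz. For a Toeplitz matrix the trailing $v\times v$ block equals the leading one (and likewise for the $(M-v)\times(M-v)$ blocks), so the swapped-block matrix above coincides with $\boldsymbol{\Psi}_{p,c}\odot\boldsymbol{\Upsilon}(v)$. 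This is precisely what the paper's proof means by the ``entrywise computation'' of \eqref{XI_ic}: the masking identity is a consequence of the cyclic-prefix/exponential structure of $\f_c$ and $\boldsymbol{\Lambda}(p)$, not of the shift matrices alone. Once you restate your lemma for Toeplitz $\bX$ (or verify it entrywise for $\boldsymbol{\Psi}_{p,c}$), the rest of your argument — the $N_T$ cancellation, the Kronecker factorization, and the lexicographic index count giving the $MNPB$ columns — goes through and matches the paper's proof.
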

\begin{proof}
See Appendix B. 
\end{proof}

In the following, we explicitly explain how the S\&M algorithm can be simplified in this scenario. In the special case where $N_R \geq IL$, it is more convenient to divide the problem into two separate smaller and easier to solve covariance estimation problems, in the spatial and temporal domains reducing the memory required to store the dictionary elements, the complexity of the algorithm, decreasing its runtime. This is possible because of the far field narrowband approximation in the channel model, which allows to approximately separate the effects of path delays and of the different arrival times at each array element. We call this method, {\it Separable Shrink and Match} (SS\&M) algorithm. In the first step, the SU estimates the AoAs, exploiting the existing structure in the spatial covariance matrix that depends on the parameters $\{b_{i,\ell}\}_{\ell=1,\ldots,L}^{i=1,\ldots,I}$. Then using the AoAs information, the SU, for each angle, filters spatially the temporal streams to produce observation free from interference from the other active directions. Then, it uses the temporal sparse covariance matrix representation to recover the occupied subcarriers. The mathematical formulation of these two steps is given in the following.

{\it Step 1}: To estimate the spatial covariance matrix $\boldsymbol{\Sigma}_S \triangleq \expect\{\by[m]\by[m]^H\}$, the SU samples the received signal at the antenna array at $m = kM$ and collect $K$ i.i.d. observations $\bY_S\triangleq[\by[0],\by[M],\ldots,\by[(K-1)M]]$ with the same spatial covariance matrix. The sparse representation of the spatial covariance matrix can be expressed as $$\text{vec}(\boldsymbol{\Sigma}_S) = \bM_S\boldsymbol{\sigma}_S + \sigma_w^2\text{vec}(\bI_{N_R})\:,$$ where $\boldsymbol{\sigma}_S$ is a $B\times 1$ sparse coefficient vector with non-negative entries. The $N_R^2\times B$ spatial dictionary matrix is constructed as $[\bM_S]_{b+1}=\text{vec}(\bee_r(b/B)\bee_r^H(b/B))$ for $b\in \mathcal{B}$.

{\it Step 2}: Similarly, to estimate the temporal covariance matrix $\boldsymbol{\Sigma}_T$, the receiver collects $K$ observation blocks $\bY_T(r)\triangleq [\by[2rM],\ldots,\by[2rM + M -1]]$, $r\in\{0,1,\ldots,K-1\}$ of size $N_R\times M$ where $2M$ is the amount of sample shift between consecutive collected sequences. The temporal covariance matrix of the spatially filtered observations $\widetilde{\bY}_T(r)\triangleq \boldsymbol{\Phi}^{\dagger}{\bY}_T(r)$ is expressed as $\boldsymbol{\Sigma}_{T,l} \triangleq \expect\{\tilde{\by}_{r,l}^T\tilde{\by}_{r,l}^*\}$ where $\tilde{\by}_{r, l}$ is the $l$th row of $\widetilde{\bY}_T(r)$. $\boldsymbol{\Phi}$ is the subspace of AoAs and $\boldsymbol{\Phi}^{\dagger}$ denotes its pseudo inverse. The sparse representation of the temporal covariance matrix is $$\text{vec}(\boldsymbol{\Sigma}_{T,l}) =\bM_T\boldsymbol{\sigma}_{T,l} + \sigma_w^2\Vert\boldsymbol{\phi}_l^T\Vert_2^2\text{vec}(\bI_M)\:,$$ where $\boldsymbol{\sigma}_{T,l}$ is a $MNP\times 1$ sparse coefficient vector with non-negative entries. The $M^2\times MNP$ temporal dictionary matrix is constructed as $[\bM_T]_{\mu(v,p,c)} = \text{vec}(\boldsymbol{\Psi}_{p,c}\odot \boldsymbol{\Upsilon}(v))$ where $\mu(v,p,c) \triangleq v PN + (p-p_l)N + c + 1$. 

For the derivations of the sparse representation of the spatial and temporal covariance matrices see Appendices C and D, respectively. Algorithm \ref{alg2} summarizes the two steps of the SS\&M algorithm.

\begin{algorithm}
\caption{SS\&M Algorithm}
\begin{algorithmic}[1] \label{alg2}
\REQUIRE $\bY_S$, ${\bM}_{S}$, $\{\bY_T(r)\}_{r=0}^{K-1}$, ${\bM}_{T}$ and $\sigma_w^2$
\STATE Apply Algorithm \ref{alg_H0} with ${\bM}_{S}$ and $\bY_S$ and find the non-negative sparse vector $\widehat{\boldsymbol{\sigma}}_S$.
\STATE {\bf Set of detected AoAs}: $\mathcal{A}_{\phi} \triangleq \{b: [\widehat{\boldsymbol{\sigma}}_S]_{b+1} > 0, b\in \mathcal{B} \}$.
\STATE {\bf Subspace of AoAs}: $\boldsymbol{\Phi}\triangleq \{\bee_r(b/B)\}_{b \in \mathcal{A}_{\phi}}$.
\STATE Compute $\boldsymbol{\Phi}^{\dagger}$ the pseudo inverse of $\boldsymbol{\Phi}$
\STATE $\widetilde{\bY}_T(r)\triangleq \boldsymbol{\Phi}^{\dagger}{\bY}_T(r)$ for $r=0,\ldots,K-1$.
\STATE {\bf for} $l=1,\ldots, |\mathcal{A}_{\phi}|$ {\bf do}
\STATE Apply Algorithm \ref{alg_H0} with ${\bM}_{T}$ and $\bY_{l} \triangleq [\tilde{\by}_{0,l}^T, \ldots,\tilde{\by}_{K-1,l}^T]$, and find the sparse non-negative vector $\widehat{\boldsymbol{\sigma}}_{T,l}$
\STATE $\mathcal{E}_{l} \triangleq \{c: [\widehat{\boldsymbol{\sigma}}_{T,l}]_{\mu(v,p,c)} > 0, v \in \mathcal{M}, p \in \mathcal{P}, c\in \mathcal{C} \}$
\STATE {\bf end for}
\STATE {\bf Set of occupied subcarriers}: ${\mathcal{E}} = \bigcup_{l=1}^{|\mathcal{A}_{\phi}|} \mathcal{E}_{l}$
\end{algorithmic}
\end{algorithm}

\section{Numerical Experiments}
\label{num_exp}
In this section, we examine the performance of our proposed algorithm numerically. We consider a MIMO OFDM system with $I = 4$ uncorrelated asynchronous sources where $N_T=2$ is the number of transmit antennas. The other channel and system parameters are considered to be $N=64$, $L_p=8$ and $L = 2$. We model the first arrival of the OFDM symbols from each user as a uniform random variable in $\mathcal{M}$ and the delays are distributed uniformly within the cyclic prefix duration. The Doppler $p_{i,\ell}$ of each path is generated as a uniform discrete random variable in the set $\mathcal{P}$ with $P = 3$. The AoAs and AoDs are continuous values in $[0,2\pi]$ where the angular difference between the AoAs is greater than $10^{\circ}$. In the simulations, we have set the number of grid points $A = B = 180$. Thus, the angle resolution of $1$ degree. We generate uncorrelated Rayleigh fading coefficients, $h_{i,\ell}^k \sim \mathcal{CN}(0,1)$. The same transmission power equal to $1$ is considered for all the sources. Throughout this entire section, the signal to noise ratio (SNR) is defined as $-10\log_{10} \sigma_w^2$ where the noise is zero-mean AWGN with variance equal to $\sigma_w^2$. In Algorithm \ref{alg_H0}, the value of $\tau_{\text{min}}$ is set to be $10^{-4}$.

In order to demonstrate the capability of SS\&M to make efficient use of the empty subcarriers without causing harmful interference to the PUs, we define two metrics. In the following, probability of false alarm and missed detection in the $c$th subcarrier are denoted by $P_{\text{FA}}(c)$ and $P_{\text{MD}}(c)$, respectively. The opportunistic spectral utilization of subcarrier $c$ is measured with $1-P_{\text{FA}}(c)$. As a result, the aggregate opportunistic throughput of the SU can be described as 
$$\sum_{c=0}^{N-1}R_c(1 -P_{\text{FA}}(c)),$$
where $R_c$ denotes the throughput achievable over the $c$th subcarrier if used by the SU. Assuming that $R_c$ is the same for all $c \in \mathcal{C}$, the expected aggregate opportunistic throughput of the SU can be measured by $$\rho_{T} \triangleq 1 - \dfrac{1}{N}\sum_{c=0}^{N-1}P_{\text{FA}}(c).$$ Assuming that all the PUs are equally important, the aggregate interference to PUs can be expressed as $$\sum_{c=0}^{N-1}C_cP_{\text{MD}}(c),$$ where $C_c$ denotes the cost incurred by causing interference with a PU in the $c$th subcarrier. Assuming that $C_c$ is equal for $c\in \mathcal{C}$, the average aggregate interference to PUs can be equivalently measured by $$\rho_I \triangleq \sum_{c=0}^{N-1}P_{\text{MD}}(c).$$

Ideally, we want to have $\rho_T$ close to $1$ while at the same time guarantee small values for $\rho_{I}$ below some threshold $\epsilon$. In Fig. \ref{fig:fig_3}, $\rho_{T}$ and $\rho_{I}$ have been plotted versus SNR when $N_R = 12$ for $K=20$ and $K=30$ for the same simulation runs. The curves are obtained by computing $P_{\text{MD}}$ and $P_{\text{FA}}$ for each subcarrier $c$ numerically in $100$ independent simulation runs. This figure illustrates that SS\&M exploits the empty subcarriers very efficiently (high aggregate throughput close to $1$) while causing small interference to the primary users (relatively small values for $\rho_I$) even for small sample sizes. In addition, we can observe that increasing the SNR or $K$ does not increase $\rho_T$ dramatically. However, increasing $K$ or SNR decreases $\rho_I$. This trade-off is a critical aspect of this algorithm that one should consider to choose $K$ based on the available SNR and the existing constraint on $\rho_I < \epsilon$.

\begin{figure}[!ht]
\centering
\includegraphics[width=1\linewidth]{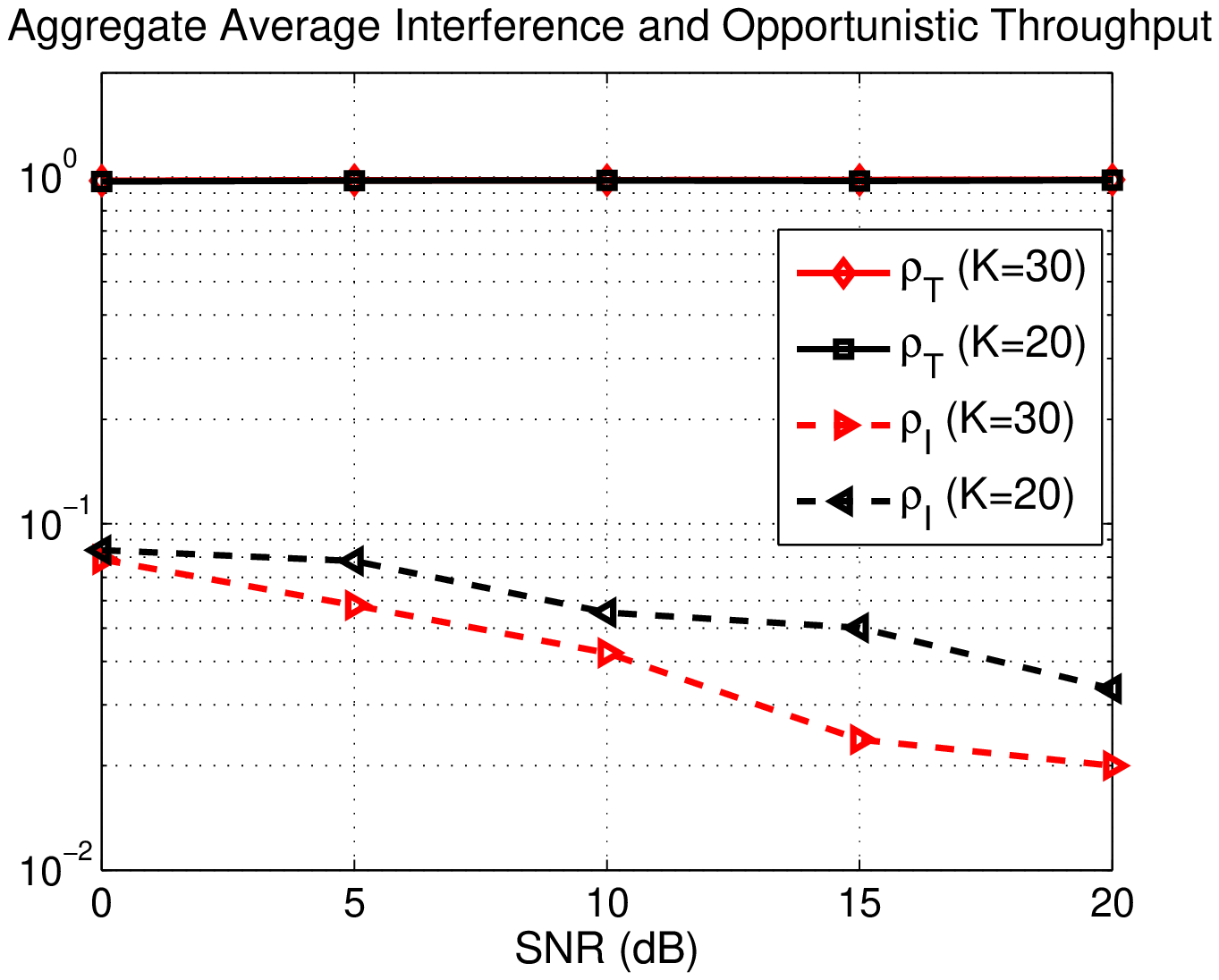}
\caption{$\rho_T$ and $\rho_I$ versus SNR when $N_R=12$ for $K = 20$ and $K=30$.}
\label{fig:fig_3}
\end{figure}

To the best of our knowledge, in the literature, there is not any method that can be fairly compared with S\&M in our asynchronous setting. As a result, we compare its performance with existing methods in estimating the covariance matrix $\boldsymbol{\Sigma}(\widehat{\boldsymbol{\sigma}})$ and AoAs. To test the MSE of our proposed covariance estimator S\&M, we compare its performance with the sample covariance, shrinkage method in \cite{Chen2} and shrinkage MMSE \cite{Chen1} estimates. For these tests, we use the temporal covariance matrix with $d=M=72$. For all simulations, we set $\text{SNR} = 0 \text{ dB}$, $|\mathcal{C}_i|=6$ for all $i\in \cI$ and let $K$ range from $5$ to $50$. Fig. \ref{fig:mse} shows the normalized MSE of the estimators defined as $$\dfrac{\Vert\boldsymbol{\Sigma} - \widehat{\boldsymbol{\Sigma}}\Vert_F^2}{\Vert\boldsymbol{\Sigma}\Vert_F^2},$$ for different values of $K$, where $\widehat{\boldsymbol{\Sigma}}$ denotes any of the covariance matrix estimates. It is evident that S\&M outperforms all the methods and in addition, it is very robust when $K$ varies and even for $K=20$ an acceptable performance has been achieved.



\begin{figure}[!ht]
\centering
\includegraphics[width=1\linewidth]{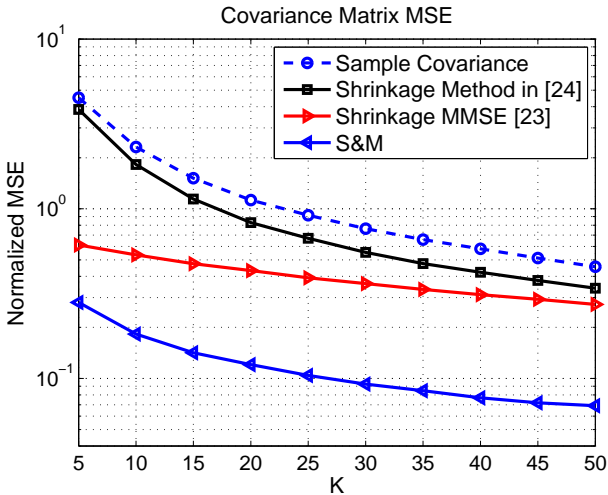}
\caption{MSE of covariance estimators for $100$ channel realizations.}
\label{fig:mse}
\end{figure}

\begin{figure}[!ht]
\centering
\includegraphics[width=1\linewidth]{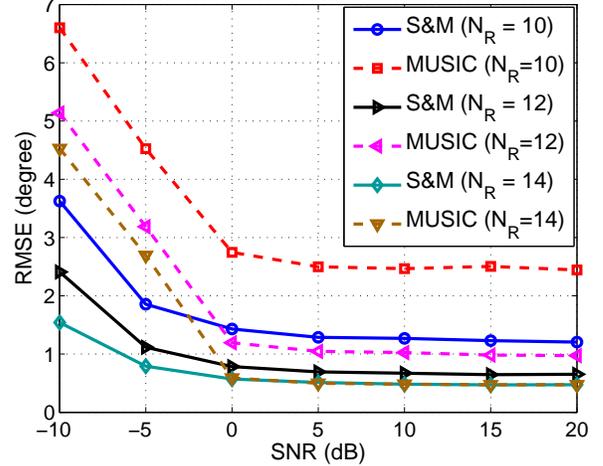}
\caption{AoA estimation RMSE when $K=20$ for $N_R = 10,12,14$.}
\label{fig:fig_1}
\end{figure}

In Fig. \ref{fig:fig_1}, the performance of S\&M in estimating the AoA is presented in terms of root mean square error (RMSE) and compared with root-MUSIC (Multiple Signal Classification) algorithm\footnote{Even though they are not shown here, we have compared the performance with standard MUSIC and ESPRIT. However, the root-MUSIC algorithm had the best performance for AoA estimation among the methods based on unstructured covariance matrix estimation.}. The curves are obtained by averaging the results of $1000$ independent simulation runs for different SNR values and $N_R$. The RMSE decreases by SNR and the gap between the root-MUSIC and S\&M RMSE reduces by increasing the number of antennas. However, in low SNR values, S\&M outperforms root-MUSIC even for large number of antennas.

\section{Conclusions}
\label{conclusions}
We proposed a new algorithm for spatial and spectral sensing in asynchronous MIMO-OFDM signals. Our method first finds an accurate estimate of the covariance matrix of the received signal at the SU using shrinkage method for small sample size. Then, it matches the estimate with a sparse representation of the covariance matrix which depends on the channel parameters and occupied subcarriers. The SU finally uses the support of the estimated sparse coefficient vector to recover the spatial and spectral pattern of the received signal.

\appendices

\section{Proof of Lemma \ref{lemma1}}
\label{app_1}

In the asynchronous scenario, at most two consecutive OFDM symbols transmitted from the $i$th PU are captured in each multipath contribution of $\by_r$, $r=0,\ldots,K-1$. The $N_RM \times 1$ vector of received signal for any of the $K$ collected observations is written as $\by_r = \bx_r + \bw_r$, where $\bx_r$ is the received signal due to the transmitted signals from the PUs and $\bw_r$ is the noise. The vector of received signal corresponding to the transmitted signals $\bx_r$ can be written as follows 
\be 
\label{x_r_vec}
\bx_r = \sum_{i\in {\mathcal I}}\sum_{\ell=1}^{L} \bx_r^0(i,\ell)+ \bx_r^1(i,\ell),
\ee 
where at the $r$th observation period the contribution from the $\ell$-th path of the $i$-th user $\bx_r(i,\ell)$ is written as the sum of the contributions of two consecutively transmitted OFDM symbols. The vector $\bx_r^1({i,\ell})$ corresponds to the OFDM symbol which its beginning is captured and $\bx_r^0({i,\ell})$ denotes the previous OFDM symbol which its tail is being observed.
 
Each of the vectors $\bx_r^j({i,\ell}) ,~~ j=0,1$ can be written in terms of the channel parameters and the random transmitted data symbols as
\be 
\label{x_r_j_vec}
\bx_r^j(i,\ell) = h_{i,\ell}^{k_j}(\bJ^j(v_{i,\ell})\otimes\bI)\mathcal{T}(\{\boldsymbol{\Psi}(m,\bar{\boldsymbol{\theta}}_{i,\ell})\}_{m=0}^{M-1})(\bF^T\otimes \bI)\ba_{i}^{k_j}
\ee 
where the indices $k_j, ~j=0,1$ are the indices of the observed OFDM symbols at the $r$th observation sequence corresponding to the $\ell$th multipath component of $i$th PU. They can be described as follows where for simplicity we have dropped their dependency on $i$, $\ell$ and $r$ in their symbol names
\be 
k_0 = 2r-\zeta - 1, ~~~ k_1 = 2r-\zeta,
\ee 
where $\zeta = \left\lfloor\dfrac{t_i + q_{i,\ell}}{M}\right\rfloor$. The parameter $v_{i,\ell} \in \mathcal{M}$ is defined as $v_{i,\ell}\triangleq (t_i + q_{i,\ell}) (\text{mod } M)$ and represents the relative displacement between the $\ell$-th multipath component of the observed OFDM symbols transmitted from $i$-th user and the start of the observation window at the SU (see Fig. \ref{fig_ofdm_symbols}). The vector $\ba_{i}^{k_j} \triangleq \text{vec}(\bA_{i}^{k_j})$, $j=0,1$ is the vector of data symbols of the $i$th source in the frequency domain where non-zero rows in $\bA_i^{k_j}$ correspond to the active subcarriers of the $i$th source. The matrix $\bJ^j(v_{i,\ell})$ is the shift matrix and is defined as 
\begin{align}
\label{define_shift_mat}
\bJ^0(v) &= 
\begin{bmatrix}
 \bzero_{v \times M-v} &  \bI_{v} \\
 \bzero_{M - v} &  \bzero_{M - v \times v} 
\end{bmatrix},\nonumber \\
\bJ^1(v) &= 
\begin{bmatrix}
 \bzero_{v \times M-v} &   \bzero_{v}\\
 \bI_{M - v}  &  \bzero_{M - v \times v} 
\end{bmatrix}.
\end{align}

The covariance matrix of the received signal at the SU is defined as $\boldsymbol{\Sigma} \triangleq \expect\{\by_r \by_r^H\}$. Since, the transmitted data and noise are independent, we can write 
$$\boldsymbol{\Sigma} = \boldsymbol{\Sigma}_X  +  \sigma_w^2\bI,$$
where $\boldsymbol{\Sigma}_X  = \expect\{\bx_r\bx_r^H\}$. In addition, $\ba_{i}^{k_0}$ and $\ba_{i}^{k_1}$ are independent for every $i$ and by exploiting the fact that $\expect\{h_{i,\ell}^{k}(h_{i',\ell'}^{k'})^*\} = \sigma_{i,\ell}\delta(i-i')\delta(\ell-\ell')\delta(k-k')$, we can write $\boldsymbol{\Sigma}_X = \boldsymbol{\Sigma}_{X}^0 + \boldsymbol{\Sigma}_{X}^1$ where
\be 
\boldsymbol{\Sigma}_{X}^j \triangleq \sum_{i \in \cI}\sum_{\ell =1}^{L}\expect\{\bx_r^j({i,\ell}) (\bx_r^j({i,\ell}))^H\}.
\ee 

For notational convenience, we introduce the coefficient
\begin{align}
\label{alpha}
\alpha_{i,v,\boldsymbol{\theta}}^j &= \sum_{u=1}^{I}\sum_{\ell=1}^{L} h_{u,\ell}^{k_j}~\delta_{\boldsymbol{\theta},\bar{\boldsymbol{\theta}}_{u,\ell}}\delta[v-v_{u,\ell}]\delta[i-u],
\end{align}
where $v \in \mathcal{M}$ and $\boldsymbol{\theta} \in \mathcal{A}_{\boldsymbol{\Theta}}$ to indicate whether the $i$th source is being observed and whether there exists a link at a certain delay plus misalignment $vT$ with a certain channel parameter $\boldsymbol{\theta}$. The indicator $\delta_{\bc,\bar{\bc}}$ is defined as
\be 
\delta_{\bc,\bar{\bc}}\triangleq  \left\{ \begin{array}{ll}
1, & \bc=\bar{\bc} \\
0  & \text{otherwise}
\end{array}
\right. 
\ee 
Using the parameterization introduced in \eqref{alpha}, it follows that
\begin{align}
\bx_r^j(i,v,\boldsymbol{\theta}) &= \alpha_{i,v,\boldsymbol{\theta}}^j\: (\bJ^j(v)\otimes\bI)(\mathcal{T}(\{\boldsymbol{\Psi}(m,\boldsymbol{\theta})\}_{m=0}^{M-1}))\nonumber \\ & ~~~\times (\bF^T\otimes \bI)\ba_{i}^{k_j}.
\end{align}

We define $\sigma_{i,v,\boldsymbol{\theta}} \triangleq \expect\{\alpha_{i,v,\boldsymbol{\theta}}^j(\alpha_{i,v,\boldsymbol{\theta}}^j)^*\}$ as the variance of the channel coefficient which in our setting does not depend on $j$ and remains the same for all $K$ collected observations. The expression for $\boldsymbol{\Sigma}_{X}^j$ taking to account the parametrization introduced in \eqref{alpha} is rewritten as follows
\begin{align}
\label{Sigma_X_j}
\boldsymbol{\Sigma}_{X}^j &\triangleq\sum_{i \in \cI}\sum_{v,\boldsymbol{\theta}} \expect\{\bx_r^j(i,v,\boldsymbol{\theta}) (\bx_r^j(i,v,\boldsymbol{\theta}))^H\}\nonumber\\
&= \sum_{i \in \cI}\sum_{v=0}^{M-1}\sum_{\boldsymbol{\theta}\in \mathcal{A}_{\boldsymbol{\Theta}}} \sigma_{i,v,\boldsymbol{\theta}} ~\boldsymbol{\Pi}_{v,\boldsymbol{\theta}}^j~,
\end{align}
where 
\begin{align}
&\boldsymbol{\Pi}_{v,\boldsymbol{\theta}}^j \triangleq\nonumber \\ &(\bJ^j(v)\otimes\bI)\mathcal{T}(\{\boldsymbol{\Psi}(m,\boldsymbol{\theta})\}_{m=0}^{M-1})(\bF^T\text{diag}(\{\gamma_{i,c}\}_{c=0}^{N-1})\bF^*\otimes \bI)\nonumber \\ 
~~~&\times\mathcal{T}(\{\boldsymbol{\Psi}^H(m,\boldsymbol{\theta})\}_{m=0}^{M-1})((\bJ^j(v))^H\otimes\bI),
\end{align}
where we have replaced $\expect\Big\{\ba_{i}^{k_j}(\ba_{i}^{k_j})^H \Big\}=\text{diag}(\{\gamma_{i,c}\}_{c=0}^{N-1})\otimes\bI_{N_T}$ in the expression. The coefficients $\gamma_{i,c}$ models whether the subcarrier $c$ is occupied or not and is defined as
\be 
\label{gamma_ic}
\gamma_{i,c}\triangleq  \left\{ \begin{array}{ll}
\dfrac{1}{N_T|\mathcal{C}_i|}, & c \in \mathcal{C}_i \\
0  & \text{otherwise}
\end{array}
\right. 
\ee 

The term $\bF^T\text{diag}(\{\gamma_{i,c}\}_{c=0}^{N-1})\bF^*$ is simplified as 
\be 
\bF^T\text{diag}(\{\gamma_{i,c}\}_{c=0}^{N-1})\bF^* = \sum_{c=0}^{N-1}\gamma_{i,c}\:\f_c\f_c^H
\ee 
where $\f_c = [\bF^T]_{c+1}$. Then, we can rewrite \eqref{Sigma_X_j} as
\be 
\boldsymbol{\Sigma}_{X}^j = \sum_{i \in \cI}\sum_{v=0}^{M-1}\sum_{\boldsymbol{\theta}\in \mathcal{A}_{\boldsymbol{\Theta}}} \sum_{c=0}^{N-1} \sigma_{i,v,\boldsymbol{\theta}}\:\gamma_{i,c}\:\boldsymbol{\Pi}_{v,\boldsymbol{\theta},c}^j~,
\ee 
where $\boldsymbol{\Pi}_{v,\boldsymbol{\theta},c}^j$ is redefined as
\begin{align}
\label{Pi}
\boldsymbol{\Pi}_{v,\boldsymbol{\theta},c}^j &\triangleq (\bJ^j(v)\otimes\bI)\mathcal{T}(\{\boldsymbol{\Psi}(m,\boldsymbol{\theta})\}_{m=0}^{M-1})(\f_c\f_c^H\otimes \bI)\nonumber \\ 
&~~~\times\mathcal{T}(\{\boldsymbol{\Psi}^H(m,\boldsymbol{\theta})\}_{m=0}^{M-1})((\bJ^j(v))^H\otimes\bI).
\end{align}

The expression in \eqref{Pi} illustrates that we can define the dictionary components as $\text{vec}(\boldsymbol{\Pi}_{v,\boldsymbol{\theta},c}^0 + \boldsymbol{\Pi}_{v,\boldsymbol{\theta},c}^1)$ which do not depend on $i$. Thus, as far as the estimation of the covariance matrix is concerned, we can define the new coefficients 
\be 
\label{sigma_v}
\sigma_{v,\boldsymbol{\theta},c} \triangleq \sum_{i \in \cI} \sigma_{i,v,\boldsymbol{\theta}}\:\gamma_{i,c}~,
\ee 
by summing over $i$. Using the vectorization operator, the sparse representation of the covariance matrix is more compactly written as 
\begin{align}
\label{Sigma_z_mat}
\text{vec}(\boldsymbol{\Sigma}) &= \bM\boldsymbol{\sigma} + \sigma_w^2\text{vec}(\bI_{MN_R}),
\end{align}
where $\bM$ is the over-complete dictionary and its columns are constructed with $\text{vec}(\boldsymbol{\Pi}_{v,\boldsymbol{\theta},c}^0 + \boldsymbol{\Pi}_{v,\boldsymbol{\theta},c}^1)$ for  $v\in \mathcal{M}$, $\boldsymbol{\theta}\in\mathcal{A}_{\boldsymbol{\Theta}}$ and $c \in \mathcal{C}$. The vector $\boldsymbol{\sigma}$ is an $MNn_{\boldsymbol{\Theta}}\times 1$ sparse coefficient vector with non-negative entries equal to $\sigma_{v,\boldsymbol{\theta},c}$.

\section{Proof of Corollary \ref{cor-1}}
In this case, $\boldsymbol{\theta}_{i,\ell} = [p_{i,\ell},b_{i,\ell},a_{i,\ell}]$, the corresponding discretized vector is denoted by $\boldsymbol{\theta} = [p,b,a]$ where $p\in \mathcal{P}$, $a\in \mathcal{A}$, $b\in \mathcal{B}$ and $\mathcal{A}_{\boldsymbol{\Theta}}=\mathcal{P}\times\mathcal{B}\times\mathcal{A}$. 

In order to explicitly describe the dictionary components, we replace $\boldsymbol{\Psi}(m,\boldsymbol{\theta})$ in \eqref{Pi} with $e^{j2\pi \frac{p m}{P_I N_0}}\bee_r(b/B) \bee_t^{H}(a/A)$. Then, it follows that
\be 
\label{T_psi}
\mathcal{T}(\{\boldsymbol{\Psi}(m,\boldsymbol{\theta})\}_{m=0}^{M-1}) = \boldsymbol{\Lambda}(p) \otimes \bee_r(b/B) \bee_t^{H}(a/A),
\ee 
where $\boldsymbol{\Lambda}(p) \triangleq \text{diag}\left\{e^{j2\pi \frac{p m}{P_I N_0}}\right\}_{m=0}^{M-1}$ is a diagonal matrix containing the Doppler coefficients.

substituting \eqref{T_psi} in \eqref{Pi}, results in the following expression for $\boldsymbol{\Pi}_{v,p,b,a,c}^j$
\begin{align}
\label{Pi_mod}
\boldsymbol{\Pi}_{v,p,b,a,c}^j &\triangleq (\bJ^j(v)\otimes\bI)\Big(\boldsymbol{\Lambda}(p)\f_c\f_c^H\boldsymbol{\Lambda}^H(p)\otimes \bee_r(b/B) \bee_t^{H}(a/A)\nonumber \\ &~~~\times\bee_t(a/A)\bee_r^H(b/B)\Big)((\bJ^j(v))^H\otimes\bI)\nonumber \\
&= N_T\Big(\bJ^j(v)\boldsymbol{\Lambda}(p)\f_c\f_c^H\boldsymbol{\Lambda}^H(p)(\bJ^j(v))^H\Big) \nonumber \\ & ~~~ \otimes \bee_r(b/B)\bee_r^H(b/B),
\end{align}
which does not depend on $a$.

We define the matrix $\boldsymbol{\Xi}_{i,v,p,c}^j$ which does not depend on the parameter $a$ anymore as
\begin{align}
\label{XI_ic}
\boldsymbol{\Xi}_{i,v,p,c}^j \triangleq  N_T\Big(\bJ^j(v)\boldsymbol{\Lambda}(p)\f_c\f_c^H\boldsymbol{\Lambda}^H(p)(\bJ^j(v))^H\Big).
\end{align}
By the entrywise computation of \eqref{XI_ic}, one can verify that the sum $\boldsymbol{\Xi}_{i,v,p,c}^0+\boldsymbol{\Xi}_{i,v,p,c}^1$ is simplified to
\begin{align}
\label{XI_ic_sim}
\boldsymbol{\Xi}_{i,v,p,c}^0 + \boldsymbol{\Xi}_{i,v,p,c}^1 = N_T\Big(\boldsymbol{\Lambda}(p)\f_c\f_c^H\boldsymbol{\Lambda}^H(p)\Big) \odot \boldsymbol{\Upsilon}(v),
\end{align}
where $\boldsymbol{\Upsilon}(v)$ denotes a masking matrix and is defined in \eqref{define_mask_mat}. 

The covariance matrix is eventually expressed as 
\begin{align}
\label{Sigma_z_mod}
\boldsymbol{\Sigma}_{X} &= \sum_{i \in \cI}\sum_{v ,p,b,c}N_T~\sigma_{i,v,p,b}\:\gamma_{i,c}\:\big( \boldsymbol{\Psi}_{p,c} \odot \boldsymbol{\Upsilon}(v)\big) \nonumber \\ &~~~ \otimes  \big(\bee_r(b/B)\bee_r^H(b/B)\big),
\end{align}
where $\boldsymbol{\Psi}_{p,c} \triangleq \boldsymbol{\Lambda}(p)\f_c\f_c^H\boldsymbol{\Lambda}^H(p)$.
The expression in \eqref{Sigma_z_mod} illustrates that we can define the dictionary elements as $\boldsymbol{\Psi}_{p,c} \odot \boldsymbol{\Upsilon}(\theta)$ in the temporal domain and $\bee_r(b/B)\bee_r^H(b/B)$ in the spatial domain which do not depend on $i$. Similar to \eqref{sigma_v}, we define the new coefficients 
\be 
\sigma_{v,p,b,c} \triangleq \sum_{i \in \cI} N_T\:\sigma_{i,v,p,b}\:\gamma_{i,c}.
\ee 

In matrix form, the sparse representation of the covariance matrix is similar to \eqref{Sigma_z_mat}. The results of Corollary \ref{cor-1} follows where the vector $\boldsymbol{\sigma}$ is a $MNPB\times 1$ sparse coefficient vector with non-negative entries defined as $[\boldsymbol{\sigma}]_{\eta(v,p,c,b)} \triangleq \sigma_{v,p,b,c}$.

\section{Derivation of the Spatial Covariance Matrix}
The $N_R\times N_R$ covariance matrix in the spatial domain is defined as $\boldsymbol{\Sigma}_{S} \triangleq \expect\{\by[m] \by^H[m]\} = \expect\{\bx[m]\bx^H[m] \} + \sigma_w^2\bI$. We first find the covariance of the $m$th and the $m'$th received vectors at the SU as
\begin{align}
\label{cov_mm}
&\expect\{\bx[m]\bx^H[m'] \} = \sum_{i\in {\mathcal I}}\sum_{\ell=1}^{L} \sum_{k,k' \in \mathbb{Z}}\sigma_{i,\ell}\: e^{j2\pi \frac{p_{i,\ell} (m-m')}{P_I N}} \bee_r(b_{i,\ell}/B)\nonumber \\ 
&~~~\times \bee^{H}_r(b_{i,\ell}/B)\expect\Big\{\bee_t^{H}(a_{i,\ell}/A)\bA_{i,k}\f'_{m-kM-q_{i,\ell}-t_i}\nonumber \\ 
&~~~\times \f'^H_{m'-k'M-q_{i,\ell}-t_{i}}\bA_{i,k'}^H \bee_t(a_{i,\ell}/A)\Big\}.
\end{align}
The term $\bee_t^{H}(a_{i,\ell}/A)\bA_{i,k}\f'_{m-kM-q_{i,\ell}-t_i}$ in the expectation in \eqref{cov_mm} is a scalar. As a result, one can rewrite the expectation as 
\begin{align}
&\f'^H_{m'-k'M-q_{i,\ell}-t_{i}}\expect\Big\{\bA_{i,k'}^H \bee_t(a_{i,\ell}/A)\bee_t^{H}(a_{i,\ell}/A)\bA_{i,k}\Big\}\nonumber \\ &~~~\times\f'_{m-kM-q_{i,\ell}-t_i}.\nonumber
\end{align}
By exploiting the independence of the rows of $\bA_{i,k'}$ we have
\begin{align}
\expect&\Big\{\bA_{i,k'}^H \bee_t(a_{i,\ell}/A)\bee_t^{H}(a_{i,\ell}/A)\bA_{i,k}\Big\} \nonumber \\
&~~~~~~= N_T\:\text{diag}(\{\gamma_{i,c}\}_{c=0}^{N-1})\delta[k-k'],
\end{align}
where $\gamma_{i,c}$ is defined in \eqref{gamma_ic}. Thus, one can rewrite \eqref{cov_mm} as
\begin{align}
\label{Sigma_X}
&\expect\{\bx[m]\bx^H[m'] \} = \sum_{i\in {\mathcal I}}\sum_{\ell=1}^{L} \sum_{k \in \mathbb{Z}} N_T\:\sigma_{i,\ell}\: e^{j2\pi \frac{p_{i,\ell} (m-m')}{P_I N}}\nonumber \\
&\times \f'^H_{m'-kM-q_{i,\ell}-t_{i}}\text{diag}(\{\gamma_{i,c}\}_{c=0}^{N-1})\f'_{m-kM-q_{i,\ell}-t_i}\nonumber \\
&\times \bee_r(b_{i,\ell}/B)\bee^{H}_r(b_{i,\ell}/B).
\end{align}
The expression in \eqref{Sigma_X} demonstrates that the $m$th and the $m'$th observations are uncorrelated if $m' = m + kM$ for $k\in \mathbb{Z}$ and thus to collect K i.i.d. samples one has to choose $m=kM$ for $k=0,\ldots, K-1$. 
Specifically, when $m=m'$, we can rewrite \eqref{Sigma_X} as
\begin{align}
\expect\{\bx[m]\bx^H[m] \} = \sum_{i\in {\mathcal I}}\sum_{\ell=1}^{L} \:\sigma_{i,\ell}\: \bee_r\Big(\dfrac{b_{i,\ell}}{B}\Big)\bee^{H}_r\Big(\dfrac{b_{i,\ell}}{B}\Big).
\end{align}

In order to express the covariance matrix over an overcomplete basis, we introduce the coefficient
\begin{align}
\vartheta_{i,b} &= \sum_{j=1}^{I}\sum_{\ell=1}^{L}\:\sigma_{j,\ell}\:\delta[b-b_{j,\ell}]\delta[i-j].
\end{align}
Then, we can express $\boldsymbol{\Sigma}_{S}$ equivalently as
\begin{align}
\boldsymbol{\Sigma}_{S} = \sum_{b =0}^{B-1} \xi_b~ \bee_r(b/B)\bee_r^H(b/B)  + \sigma_w^2\bI ,
\end{align} 
where $\xi_b \triangleq \sum_{i \in \cI}\vartheta_{i,b}$. In matrix form, the sparse representation of the spatial covariance matrix is more compactly written as 
\begin{align}
\label{Sigma_S_mat}
\text{vec}(\boldsymbol{\Sigma}_{S}) &= \bM_{S}\boldsymbol{\sigma}_S + \sigma_w^2\text{vec}(\bI),
\end{align}
where $[\bM_{S}]_{b+1} = \text{vec}(\bee_r(b/B)\bee_r^H(b/B))$ and $[\boldsymbol{\sigma}_S]_{b+1} \triangleq \xi_b$.

\section{Derivation of the Temporal Covariance Matrix}
The $M\times M$ temporal covariance matrix for the $l$th set of spatially filtered observations $\widetilde{\bY}_T(r)\triangleq \boldsymbol{\Phi}^{\dagger}{\bY}_T(r)$ in the temporal domain is expressed as $\boldsymbol{\Sigma}_{T,l} \triangleq \expect\{\tilde{\by}_{r,l}^T\tilde{\by}_{r,l}^*\}$ where $\tilde{\by}_{r, l}$ is the $l$th row of $\widetilde{\bY}_T(r)$. In the asynchronous scenario, at most two consecutive OFDM symbols transmitted from the $i$th source are captured in $\tilde{\by}_{r,l} = \boldsymbol{\phi}_{l}\bY_T(r)$ where $\boldsymbol{\phi}_{l}$ is the $l$th row of $\boldsymbol{\Phi}^{\dagger}$ and $\bY_T(r) = \bX_r + \bW_r$. The $N_R\times M$ block of received signal from the sources for any of the $K$ collected observations can be written similar to \eqref{x_r_vec} as follows
\begin{align}
\bX_r & = \sum_{i \in \cI}\sum_{\ell =1}^{L} \bX_r^0({i,\ell}) + \bX_r^1({i,\ell}),
\end{align}
where $\text{vec}(\bX_r^j(i,\ell)) = \bx_r^j(i,\ell)$ for $j=0,1$. 

For notational convenience, similar to \eqref{alpha}, we introduce the coefficient
\begin{align}
\label{alpha_new}
\alpha_{i,v,p,a,b}^j &= \sum_{u=1}^{I}\sum_{\ell=1}^{L}h_{u,\ell}^{k_j}\:\delta[a-a_{u,\ell}]\delta[b-b_{u,\ell}]\nonumber \\
& ~~~\times \delta[v-v_{u,\ell}]\delta[p-p_{u,\ell}]\delta[i-u],
\end{align}
where $v \in \mathcal{M}$, $a \in \mathcal{A}$ and $b\in \mathcal{B}$.

%

Using the parametrization introduced in \eqref{alpha_new}, one can express $\bX_r^j({i,\ell})$ as
\be 
\bX_r^j({i,v,p,a,b}) = \alpha_{i,v,p,a,b}^j \bee_r(b/B) \bee_t^{H}(a/A) \bS^j(i,v,p),
\ee 
where $\bS^j(i,v,p)$ is defined as follows
\be 
\bS^j(i,v,p) = \bA_{i}^{k_j} \bF \boldsymbol{\Lambda}(p)(\bJ^j(v))^T.
\ee

The temporal covariance matrix $\boldsymbol{\Sigma}_{T,l} \triangleq \expect\{\by_{r,l}^T \by_{r,l}^*\} = \expect\{\bX_r^T\boldsymbol{\phi}_l^T \boldsymbol{\phi}_l^* \bX_r^*\} + \sigma_w^2\Vert\boldsymbol{\phi}_l^T\Vert_2^2\bI$ can be equivalently expressed as  $\boldsymbol{\Sigma}_{T,l} = \boldsymbol{\Sigma}_{T,l}^0 + \boldsymbol{\Sigma}_{T,l}^1 + \sigma_w^2\Vert\boldsymbol{\phi}_l^T\Vert_2^2\bI$ where 
\begin{align}
\label{Sigma_temporal}
\boldsymbol{\Sigma}_{T,l}^j &\triangleq \sum_{i \in \cI}\sum_{v ,p,a,b} \expect\{(\bX_r^j({i,v,p,a,b}))^T(\bX_r^j({i,v,p,a,b}))^*\}\nonumber \\
&= \sum_{i \in \cI}\sum_{v ,p,b,c} \sigma_{i,v,p,a,b}~\pi_{b,l}~\gamma_{i,c}~\boldsymbol{\Xi}_{i,v,p,c}^j\:,
\end{align}
and $\sigma_{i,v,p,a,b} \triangleq \expect\{\alpha_{i,v,p,a,b}^j(\alpha_{i,v,p,a,b}^j)^*\}$ is the variance of the channel coefficient which does not depend on $j$ and is the same for all $K$ collected observations. The matrix $\boldsymbol{\Xi}_{i,v,p,c}^j$ is defined in \eqref{XI_ic}. In \eqref{Sigma_temporal}, we have replaced 
\begin{align}
\expect&\{(\bA_{i}^{k_j})^T\bee_t^{*}(a/A)\bee_r^T(b/B)\boldsymbol{\phi}_l^T \boldsymbol{\phi}_l^*\bee_r^*(b/B)\bee_t^{T}(a/A)\bA_{i}^{k_j}\} \nonumber \\ 
&~~~= N_T~\pi_{b,l}~\text{diag}(\{\gamma_{i,c}\}_{c=0}^{N-1})\:,
\end{align}
where $\expect\{(\bA_{i}^{k_j})^T\bee_t^{*}(a/A)\bee_t^{T}(a/A)\bA_{i}^{k_j}\} = \text{diag}(\{\gamma_{i,c}\}_{c=0}^{N-1})$ and the parameter $\pi_{b,l} \triangleq |\bee_r^T(b/B)\boldsymbol{\phi}_l^T|^2$ is a scalar adding more sparsity to our model, because it is equal to zero when $b \in \mathcal{A}_{\phi}$. 

The expression in \eqref{XI_ic} illustrates that the dictionary elements, $\big(\boldsymbol{\Xi}_{\theta,p,c}^0 + \boldsymbol{\Xi}_{\theta,p,c}^1)$, expressed in \eqref{XI_ic_sim}, do not depend on $i$ and $a$. Thus, as far as the estimation of the temporal covariance matrix and the occupied subcarriers are concerned, we can remove $a$ from the indices and define the new coefficients 
\be 
\label{sigma_l_vpc}
\sigma_{v,p,c}^l \triangleq \sum_{i \in \cI}\sum_{b=0}^{B-1} \sigma_{i,v,p,b}~\pi_{b,l}~\gamma_{i,c}.
\ee 
Using \eqref{sigma_l_vpc} and \eqref{XI_ic_sim}, the temporal covariance matrix is expressed as  
\begin{align}
\label{Sigma_z_temporal}
\boldsymbol{\Sigma}_{T,l} &= \sum_{v ,p,c} \sigma_{v,p,c}^l ~\big(\boldsymbol{\Psi}_{p,c}\odot \boldsymbol{\Upsilon}(v)) + \sigma_w^2\Vert\boldsymbol{\phi}_l^T\Vert_2^2\:\bI\:.
\end{align} 
If $b_l \in \mathcal{A}_{\phi}$ is the $l$th estimated AoA with its corresponding steering vector as the $l$th column in $\boldsymbol{\Phi}$ and $b_l = b_{i,\ell}$, then the coefficients $\sigma_{v,p,c}^l$ are nonzero for $v = v_{i,\ell},~ p = p_{i,\ell},~ c\in \mathcal{C}_i$. Thus, the filter $\boldsymbol{\Phi}$ makes the representation even sparser and more structured. The sparse representation of the temporal covariance matrix is more compactly written as 
\begin{align}
\label{Sigma_T_mat}
\text{vec}(\boldsymbol{\Sigma}_{T,l}) =\bM_T\boldsymbol{\sigma}_{T,l} + \sigma_w^2\Vert\boldsymbol{\phi}_l^T\Vert_2^2\:\text{vec}(\bI_M),
\end{align}
where $[\boldsymbol{\sigma}_{T,l}]_{\mu(v,p,c)}=\sigma_{v,p,c}^l$ is a $MNP\times 1$ sparse coefficient vector with non-negative entries and $[\bM_T]_{\mu(v,p,c)} = \text{vec}(\boldsymbol{\Psi}_{p,c}\odot \boldsymbol{\Upsilon}(v))$. 

\bibliographystyle{IEEEbib}

\end{document}